\newtheorem{theorem}{Theorem}[section]
\newtheorem{lemma}[theorem]{Lemma}
\newtheorem{proposition}[theorem]{Proposition}
\theoremstyle{definition}
\newtheorem{definition}[theorem]{Definition}
\newtheorem{remark}[theorem]{Remark}
\numberwithin{equation}{section}
\begin{document}
\title[Embedding between Local Hardy and $\alpha$-Modulation Spaces]{Sharp embedding relations between Local Hardy and $\alpha$-Modulation Spaces}
\author{GUOPING ZHAO}
\address{School of Applied Mathematics, Xiamen University of Technology, Xiamen, 361024, P.R.China}
\email{guopingzhaomath@gmail.com}

\author{GUILIAN GAO}
\address{School of Science, Hangzhou Dianzi University, Hangzhou, 310018, P.R. China}
\email{gaoguilian@hdu.edu.cn}

\author{WEICHAO GUO}
\address{School of Mathematics and Information Sciences, Guangzhou University, Guangzhou, 510006, P.R.China}
\email{weichaoguomath@gmail.com}

\subjclass[2000]{42B35, 46E35}
\keywords{embedding, local Hardy space, $\alpha$-modulation space}

\begin{abstract}
  We give the optimal embedding relations between local Hardy space and $\alpha$-modulation spaces,
  which extend the results for the embedding relations between local Hardy and modulation spaces obtained by Kobayashi, Miyachi and Tomita in [Studia Math. 192 (2009), 79-96].
\end{abstract}

\maketitle

\section{Introduction}
The modulation space $M_{p,q}^{s}$ was first introduced by Feichtinger \cite{Feichtinger} in 1983.
Some motivations and historical remarks can be founded in \cite{Feichtinger_Survey}.
As function spaces associated with the uniform decomposition (see \cite{Tribel_modulation-1983}),
modulation space has a close relationship with the topics of time-frequency analysis (see \cite{Grochenig}),
and it has been regarded as a appropriate space for the study of partial differential equations (see \cite{RSW_2012}).
On the other hand, it is well known that the Besov space $B_{p,q}^{s}$, constructed by the dyadic
decomposition on frequency plane, is also a popular function space in the studies of harmonic analysis and partial differential equations.

A more general function space, named $\alpha$-modulation space, denoted by $M_{p,q}^{s,\alpha}$, was  introduced by Gr\"{o}bner \cite{Grobner-doctor+thesis-1992} in $1992$,
in order to link modulation and Besov spaces by the parameter $\alpha\in [0,1]$.
The modulation spaces $M_{p,q}^s$ are the special $\alpha$-modulation spaces when $\alpha=0$,
and the (inhomogeneous) Besov space $B_{p,q}^s$ can be regarded as the limit case of $M_{p,q}^{s,\alpha}$ as $\alpha\rightarrow 1$ (see \cite{Grobner-doctor+thesis-1992}).
Sometimes, we use $M_{p,q}^{s,1}$ to denote the inhomogeneous Besov space $B_{p,q}^s$ for convenience.
The interested reader can find some basic properties of $\alpha$-modulation spaces in \cite{Han-Wang_embedding-JMSJ-2014}.
We note that the $\alpha$-modulation space is not the  intermediate space between modulation and Besov spaces in the sense of complex interpolation (see \cite{Guo-Zhao_interpolation}).

Embedding relations among frequency decomposition spaces, including Lebesgue spaces, Sobolev spaces, Besov spaces, modulation spaces and $\alpha$-modulation spaces, have been concerned
by many authors, for example, one can see
Gr\"{o}bner \cite{Grobner-doctor+thesis-1992},
Okoudjou \cite{Okoudjou-Proceeding-2004},
Toft \cite{Toft_Continunity-JFA-2004},
Sugimoto-Tomita \cite{Sugimoto-Tomita-JFA-2007},
Wang-Han \cite{Han-Wang_embedding-JMSJ-2014},
Guo-Fan-Wu-Zhao \cite{Guo-Fan-Wu-Zhao_embedding}
for embedding relation among modulation spaces, Besov spaces and $\alpha$-modulation spaces.
In particular, Kobayashi-Miyachi-Tomita \cite{Kobayashi-Miyachi-Tomita-Studia-2009} studied the embedding relations between local Hardy spaces $h_p$ and modulation spaces $M_{p,q}^s$,
Kobayashi-Sugimoto \cite{Kobayashi-Sugimoto-JFA-2011} consider the embedding relations between Sobolev and modulation spaces.
We state their results as follows.

\bigskip
For $(p,q)\in (0,\infty]^2$, we define $A(p,q)=0\wedge n(1-1/p-1/q)\wedge n(1/p-1/q)$, $B(p,q)=0\vee  n(1-1/p-1/q)\vee n(1/p-1/q)$, that is,
\begin{equation*}
  A(p,q)=
  \begin{cases}
  0,  &\text{if } (1/p,1/q)\in A_1:\, 1/q\leqslant (1-1/p)\wedge 1/p,\\
  n(1-1/p-1/q), &\text{if } (1/p,1/q)\in A_2:\, 1/p\geqslant (1-1/q)\vee 1/2,\\
  n(1/p-1/q), &\text{if } (1/p,1/q)\in A_3:\, 1/p \leqslant 1/q \wedge 1/2;
  \end{cases}
\end{equation*}
\begin{equation*}
  B(p,q)=
  \begin{cases}
  0,  &\text{if } (1/p,1/q)\in B_1:\, 1/q\geqslant (1-1/p)\vee 1/p,\\
  n(1-1/p-1/q), &\text{if } (1/p,1/q)\in B_2:\, 1/p\leqslant (1-1/q)\wedge 1/2, \\
  n(1/2-1/q), &\text{if } (1/p,1/q)\in B_3:\, 1/p\geqslant 1/q\vee 1/2.
  \end{cases}
\end{equation*}

\begin{tikzpicture}[scale=3]
\coordinate (Origin) at (0,0);
\coordinate(horizontal_axis) at (1.2,0);
\coordinate(vertical_axis) at (0,1.2);
\coordinate (m10) at (1,0);
\coordinate (m0505) at (0.5,0.5);
\coordinate (m11) at (1.2,1.2);
\coordinate (m051) at (0.5,1.2);
\draw[thick,->] (Origin) -- (horizontal_axis) node[below]{$\frac{1}{p_1}$};
\draw[thick,->] (Origin) -- (vertical_axis) node[left]{$\frac{1}{q_2}$};
\draw [very thick] (m0505)-- (m10) node [below] {$1$};
\draw [very thick] (m0505)--(Origin) node [left] {$(0,0)$};
\draw [very thick] (m0505)-- (m051);
\draw(0.8,0.7)node{$A_2$};
\draw(0.5,0.2)node{$A_1$};
\draw(0.25,0.5)node{$A_3$};
\node at (0.6,-0.25){\small Figure 1};
\coordinate (2Origin) at (2,0);
\coordinate(2horizontal_axis) at (3.2,0);
\coordinate(2vertical_axis) at (2,1.2);
\coordinate (2m050) at (2.5,0);
\coordinate (2m0505) at (2.5,0.5);
\coordinate (2m11) at (3.2,1.2);
\coordinate (2m01) at (2,1);
\draw[thick,->] (2Origin) -- (2horizontal_axis) node[below]{$\frac{1}{p_2}$};
\draw[thick,->] (2Origin) -- (2vertical_axis) node[left]{$\frac{1}{q_1}$};
\draw [very thick] (2m0505)-- (2m01) node [left] {$1$};
\draw [very thick] (2m11)--(2m0505);
\draw (2Origin) node [left] {$(0,0)$};
\draw [very thick] (2m0505)-- (2m050) node [below] {$1/2$};
\draw(2.7,0.4)node{$B_3$};
\draw(2.25,0.3)node{$B_2$};
\draw(2.5,0.8)node{$B_1$};
\node at (2.6,-0.25){\small Figure 2};
\end{tikzpicture}

\hspace{-12pt}\textbf{Theorem A} (cf. \cite{Kobayashi-Miyachi-Tomita-Studia-2009})\quad
Let $0<p\leqslant 1$, $0<q\leqslant \infty$ and $s\in \mathbb{R}$.
Then, $h_{p} \subset M_{p,q}^s$
  if and only if $s\leqslant A(p,q)$ with strict inequality when $1/q> 1/p$.
\bigskip
~\\
\hspace{-12pt}\textbf{Theorem B} (cf. \cite{Kobayashi-Miyachi-Tomita-Studia-2009})\quad
Let $0<p\leqslant 1$, $0<q\leqslant \infty$ and $s\in \mathbb{R}$.
Then, $M_{p,q}^{s} \subset h_{p}$
  if and only if $s\geqslant B(p,q)$ with strict inequality when $1/p> 1/q$.
~\\
\hspace{-12pt}\textbf{Theorem C} (cf. \cite{Kobayashi-Sugimoto-JFA-2011})\quad
Let $1\leqslant p,q \leqslant \infty$ and $s\in \mathbb{R}$.
Then, $L^p \subset M_{p,q}^s$
  if and only if one of the following conditions holds:
    \begin{enumerate}
     \item
     $p>1$, $s\leqslant A(p,q)$ with strict inequality when $1/q> 1/p$,
     \item
     $p=1$, $s\leqslant A(1,q)$ with strict inequality when $q\neq \infty$.
   \end{enumerate}
~\\
\hspace{-12pt}\textbf{Theorem D} (cf. \cite{Kobayashi-Sugimoto-JFA-2011})\quad
Let $1\leqslant p,q \leqslant \infty$ and $s\in \mathbb{R}$.
Then, $M_{p,q}^{s} \subset L^p$
  if and only if one of the following conditions holds:
    \begin{enumerate}
     \item
     $p<\infty$, $s\geqslant B(p,q)$ with strict inequality when $1/p> 1/q$,
     \item
     $p=\infty$, $s\geqslant B(p,q)$ with strict inequality when $q>1$.
   \end{enumerate}

As mentioned before, $\alpha$-modulation space is a generalization of modulation space, and that can not be obtained by interpolation between
modulation and Besov spaces,
so it is interesting to ask what is the sharp conditions for the embedding between $\alpha$-modulation and local Hardy spaces (Sobolev spaces).
The main purpose of this paper is to give the answer.
Now, we state our main theorems as follows.

\begin{theorem} \label{theorem, sharpness of embedding from local Hardy spaces to alpha-modulation spaces}
Let $\alpha\in[0,1)$, $0< p_1<\infty$, $0<p_2, q_2\leqslant\infty$, $s_2\in \mathbb{R}$. Then
\begin{equation}\label{conclusion, local Hardy spaces embedded in alpha-modulation spaces}
h^{p_1}\subset M^{s_2,\alpha}_{p_2,q_2}
\end{equation}
if and only if $1/{p_2}\leqslant1/{p_1}$ and
\begin{equation}\label{condition, from local Hardy to mod}
  s_2\leqslant n\alpha(1/p_2-1/p_1)+(1-\alpha)A(p_1,q_2),
\end{equation}
with strict inequality in (\ref{condition, from local Hardy to mod}) when $1/q_2> 1/p_1$.
\end{theorem}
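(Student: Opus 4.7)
The plan is to treat the ``if'' and ``only if'' parts separately, using a frequency-decomposition approach in each case. For sufficiency, I will reduce to the $\alpha=0$ situation, which is Theorem~A. For necessity, I will test the candidate embedding on two families of extremisers: one designed to extract the index condition $1/p_2\leq1/p_1$, and a second, built from Fourier-localized wave packets on a single $\alpha$-ball, designed to force the sharp bound on $s_2$ including its dependence on $q_2$.

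\textbf{Sufficiency.} Fix a smooth $\alpha$-partition of unity $\{\eta_k^{\alpha}\}$ with $\mathrm{supp}\,\eta_k^{\alpha}\subset B(c_k,C\langle c_k\rangle^{\alpha})$. Because $p_2\geq p_1$, Bernstein's inequality on each $\alpha$-ball gives
\[
\|\eta_k^{\alpha}(D)f\|_{L^{p_2}}\lesssim\langle c_k\rangle^{n\alpha(1/p_1-1/p_2)}\|\eta_k^{\alpha}(D)f\|_{L^{p_1}}.
\]
Weighting by $\langle c_k\rangle^{s_2}$ and taking $\ell^{q_2}$ sums, the hypothesis \eqref{condition, from local Hardy to mod} absorbs the Bernstein loss, and the problem reduces to bounding $\|f\|_{M^{(1-\alpha)A(p_1,q_2),\alpha}_{p_1,q_2}}$ by $\|f\|_{h^{p_1}}$. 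I would combine Theorem~A (which handles $\alpha=0$) with a known inclusion between $\alpha$-modulation spaces of the form $M^{s,0}_{p,q}\hookrightarrow M^{(1-\alpha)s,\alpha}_{p,q}$ (available in the Han--Wang embedding theory) to finish this step; the strict-inequality endpoint at $1/q_2=1/p_1$ is inherited from the corresponding endpoint in Theorem~A.

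\textbf{Necessity.} To force $1/p_2\leq1/p_1$, choose $f_0$ with $\widehat{f_0}$ supported in a single $\alpha$-ball and let $f=\sum_{j=1}^{N}f_0(\cdot-x_j)$ with the $x_j$ widely separated; then $\|f\|_{h^{p_1}}\sim N^{1/p_1}$ while $\|f\|_{M^{s_2,\alpha}_{p_2,q_2}}\sim N^{1/p_2}$, and letting $N\to\infty$ gives the index condition. For the $s_2$-bound, take a wave packet $g_R$ with $\widehat{g_R}$ a bump of height one on an $\alpha$-ball centered at $|c|\sim R$ of radius $\sim R^{\alpha}$. A direct calculation yields $\|g_R\|_{M^{s_2,\alpha}_{p_2,q_2}}\sim R^{s_2+n\alpha(1-1/p_2)}$ and $\|g_R\|_{h^{p_1}}\sim R^{n\alpha(1-1/p_1)}$, producing the baseline inequality $s_2\leq n\alpha(1/p_2-1/p_1)$. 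To pull out the full $A(p_1,q_2)$ contribution and the strict inequality when $1/q_2>1/p_1$, I adapt the chirp/sum extremisers from Kobayashi--Miyachi--Tomita, rescaling their frequency spacings to match the $\alpha$-ball geometry at scale $R$.

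\textbf{Main obstacle.} The principal difficulty is the final step of necessity: producing a single parameter family of test functions that simultaneously captures the $A(p_1,q_2)$ term (with its case distinction across the three regions $A_1,A_2,A_3$), the $\alpha$-correction $n\alpha(1/p_2-1/p_1)$, and the strictness at $1/q_2=1/p_1$. This requires tracking how the $\alpha$-ball scale $R^{\alpha}$ interacts with the $h^{p_1}$ quasinorm (which, for $p_1\leq1$, is not equivalent to $\|\cdot\|_{L^{p_1}}$ except after high-frequency projection), and verifying that the known Kobayashi--Miyachi--Tomita constructions survive the $\alpha$-rescaling and yield matching upper and lower estimates in each subregion.
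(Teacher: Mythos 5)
Your sufficiency argument has a fatal gap: the inclusion $M^{s,0}_{p,q}\hookrightarrow M^{(1-\alpha)s,\alpha}_{p,q}$ that you invoke is \emph{false} whenever $s<0$, which is exactly the range $A(p_1,q_2)<0$ where the theorem has content. To see this, let $f_j$ have $\widehat{f_j}$ equal to a fixed bump on a unit cube centered at frequency $|\xi|\sim 2^j$. Only $O(1)$ of the $\eta_k^{0}$, and only $O(1)$ of the $\eta_k^{\alpha}$, are nonzero on $\mathrm{supp}\,\widehat{f_j}$; the relevant $\alpha$-index satisfies $\langle k\rangle\sim 2^{j(1-\alpha)}$, so the weight $\langle k\rangle^{(1-\alpha)s/(1-\alpha)}=\langle k\rangle^{s}\sim 2^{j(1-\alpha)s}$, and hence
\begin{equation*}
\|f_j\|_{M^{s,0}_{p,q}}\sim 2^{js}\|f_j\|_{L^p},\qquad \|f_j\|_{M^{(1-\alpha)s,\alpha}_{p,q}}\sim 2^{j(1-\alpha)s}\|f_j\|_{L^p},
\end{equation*}
with $\|f_j\|_{L^p}$ independent of $j$. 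The ratio $2^{-j\alpha s}\to\infty$ for $s<0$, so no such inclusion holds. More generally this single-bump test shows that any embedding $M^{s_1,0}_{p,q}\subset M^{s_2,\alpha}_{p,q}$ forces $s_2\leqslant s_1$, so the best exponent reachable by composing Theorem~A with inter-$\alpha$ embeddings is $s_2\leqslant A(p_1,q_2)$, strictly weaker than the claimed $(1-\alpha)A(p_1,q_2)$ when $A(p_1,q_2)<0$ and $\alpha>0$. This is precisely why the theorem is not a formal corollary of the modulation-space case. The paper instead proves the endpoint estimates directly in the $\alpha$-modulation norm for $h^{p_1}$-atoms, namely $\|a\|_{M^{n(1-\alpha)(1-1/p_1),\alpha}_{p_1,\infty}}\lesssim 1$ and, for $p_1<1$, $\|a\|_{M^{n(1-\alpha)(1-2/p_1),\alpha}_{p_1,p_1}}\lesssim 1$ (splitting near/far from the supporting cube $Q$ and using the vanishing moments via a Taylor expansion of $(\rho_k^{\alpha})^{\vee}$, with the two regimes $|Q|^{1/n}\langle k\rangle^{1/(1-\alpha)}\lessgtr 1$), then interpolates these corners with the trivial $L^2$ and $L^{\infty}$ bounds, and handles $1/q_2>1/p_1$ with an $\varepsilon$-loss via the (valid) embedding lemma within a fixed $\alpha$. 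Your Bernstein reduction from $p_2$ to $p_1$ is fine and matches the paper; note also that Theorem~A only covers $p_1\leqslant 1$, so for $1<p_1<\infty$ you would in any case need a separate argument.

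Your necessity sketch points in the right direction and partially overlaps the paper's: the translation argument for $1/p_2\leqslant 1/p_1$ and the single $\alpha$-ball wave packet giving $s_2\leqslant n\alpha(1/p_2-1/p_1)$ are both sound. But the step you defer (``adapting the Kobayashi--Miyachi--Tomita extremisers to the $\alpha$-geometry'') is where all the remaining work lies, and it is not a single-family construction. The paper's device is to extract \emph{two} discrete sequence embeddings from the hypothesis: one from sums $\sum_j a_j g_j(\cdot+jhe_0)$ of dyadic-annulus pieces (each annulus meeting $\sim 2^{jn(1-\alpha)}$ of the $\alpha$-balls), yielding $l_{p_1}^{n(1-1/p_1),1}\subset l_{q_2}^{n(1-\alpha)/q_2+n\alpha(1-1/p_2)+s_2,1}$, and one from sums over the $\alpha$-lattice, yielding $l_{p_1}^{n\alpha(1-1/p_1),\alpha}\subset l_{q_2}^{n\alpha(1-1/p_2)+s_2,\alpha}$; the sharp embedding lemma for weighted $\ell^q$ spaces then delivers the piecewise formula for $A(p_1,q_2)$ together with the strictness at $1/q_2>1/p_1$. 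You would need to carry out both constructions (including the $h\to\infty$ separation limit that decouples the $L^{p_1}$ norms) to complete the necessity.
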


\begin{theorem}\label{theorem, sharpness of embedding from alpha-modulation spaces to local Hardy spaces}
Let $\alpha\in[0,1)$, $0< p_2<\infty$, $0<p_1, q_1\leqslant\infty$, $s_1\in \mathbb{R}$. Then
\begin{equation}\label{conclusion, alpha-modulation spaces embedded in local Hardy spaces}
M^{s_1,\alpha}_{p_1,q_1}\subset h^{p_2}
\end{equation}
if and only if $1/{p_2}\leqslant1/{p_1}$ and
\begin{equation}\label{condition, from mod to local Hardy}
  s_1\geqslant n\alpha(1/p_1-1/p_2)+(1-\alpha)B(p_2,q_1),
\end{equation}
with strict inequality in (\ref{condition, from mod to local Hardy}) when $1/p_2> 1/q_1$.
\end{theorem}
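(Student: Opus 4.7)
I would prove the two directions of the equivalence separately. The unifying idea is that a Littlewood--Paley decomposition localizes $f$ in frequency to dyadic annuli, on each of which the $\alpha$-modulation decomposition reduces, after a dilation, to a standard modulation decomposition. Then Theorems B and D can be applied on each dyadic piece and the contributions summed.

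\textbf{Sufficiency.} First I would decompose $f=\sum_j \Delta_j f$ so that $\widehat{\Delta_j f}$ is supported in $\{|\xi|\sim 2^j\}$. On this annulus the $\alpha$-covering consists of balls of radius $\sim 2^{j\alpha}$, and the associated piece of the $M^{s_1,\alpha}_{p_1,q_1}$ norm coincides with a modulation-type expression at scale $2^{j\alpha}$. Applying the dilation $g_j(x) = (\Delta_j f)(2^{-j\alpha} x)$ rescales the decomposition to unit scale; Theorem B (or Theorem D if $p_2\geqslant 1$) applied to $g_j$ bounds its $h^{p_2}$ norm by its $M^{B(p_2,q_1)}_{p_2,q_1}$ norm. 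Undoing the dilation produces a factor $2^{jn\alpha(1/p_1-1/p_2)}$ from a Bernstein step (which requires $1/p_2\leqslant 1/p_1$) and $2^{j(1-\alpha)B(p_2,q_1)}$ from the regularity exponent, giving exactly the hypothesized threshold $n\alpha(1/p_1-1/p_2)+(1-\alpha)B(p_2,q_1)$. Summing over $j$ uses the compatibility of $h^{p_2}$ with frequency-disjoint decompositions (square-function estimates for $p_2\leqslant 2$, duality for $p_2>2$).

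\textbf{Necessity.} I would exhibit test functions saturating each condition. The restriction $1/p_2\leqslant 1/p_1$ is forced by rescaling a fixed Schwartz bump with compact frequency support: the $M^{s_1,\alpha}_{p_1,q_1}$ norm scales like $\lambda^{-n/p_1}$ and the $h^{p_2}$ norm like $\lambda^{-n/p_2}$ as $\lambda\to 0$, so $p_1\leqslant p_2$ is needed. For the sharpness of $s_1$, I would construct, for each of the three regimes $B_1,B_2,B_3$ in the definition of $B(p_2,q_1)$, a Knapp-type family $f_\lambda$ whose Fourier transform is supported in a ball $B(\xi_0,c\lambda^\alpha)$ at distance $|\xi_0|\sim\lambda$; computing both norms and letting $\lambda\to\infty$ yields sharpness of the regularity exponent. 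The strict-inequality case $1/p_2>1/q_1$ requires replacing the single-bump example by a sum of $N\to\infty$ widely-separated translates, so that the mismatch between the $\ell^{q_1}$-summation on the $\alpha$-modulation side and the $L^{p_2}$-summation on the local Hardy side becomes visible through the failure of $\ell^{q_1}\hookrightarrow \ell^{p_2}$.

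\textbf{Main obstacle.} The hardest step will be maintaining sharpness across all three regimes of the piecewise function $B(p_2,q_1)$ simultaneously, since each regime demands its own extremizer adapted to the $\alpha$-scaling. In particular, the regime $B_3$ (where the exponent depends only on $1/2-1/q_1$) is the most delicate: the local Hardy norm for $p_2\leqslant 1$ requires a square-function interpretation that is incompatible with naive pointwise estimates, and the strict-inequality case $1/p_2>1/q_1$ also lives inside this regime. Reconciling these two difficulties by a careful sparse-bump construction, and handling the transition between the regimes $B_2$ and $B_3$ at the critical line $1/p_2 = 1/2$, is the principal technical hurdle of the proof.
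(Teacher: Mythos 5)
Your necessity argument has a genuine gap. A Knapp-type family whose Fourier transform sits in a single ball $B(\xi_0,c\lambda^{\alpha})$ with $|\xi_0|\sim\lambda$ meets only $O(1)$ of the supports of the $\eta_k^{\alpha}$, so its $M^{s_1,\alpha}_{p_1,q_1}$ norm is $\sim\lambda^{s_1}\|f_\lambda\|_{L^{p_1}}$ \emph{independently of $q_1$}; testing it against $\|f_\lambda\|_{h^{p_2}}\sim\|f_\lambda\|_{L^{p_2}}$ can only ever produce the regime-$B_1$ constraint $s_1\geqslant n\alpha(1/p_1-1/p_2)$. No single-bump example, however adapted, can force the $q_1$-dependent thresholds $n(1-1/p_2-1/q_1)$ and $n(1/p_2-1/q_1)$. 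For those you need test functions occupying many cells of the $\alpha$-covering at once, already for the non-strict inequality: a bump filling a whole dyadic annulus $|\xi|\sim 2^j$ (so that the $\sim 2^{jn(1-\alpha)}$ cells contribute a cardinality factor $2^{jn(1-\alpha)/q_1}$ on the modulation side) yields the $B_2$ constraint, and a sum of bumps in separated cells, translated far apart in physical space so the $h^{p_2}$ norm decouples, yields the $B_3$ constraint. You reserve multi-bump constructions only for the strict-inequality refinement, so as written your necessity proof stops at $B_1$. This is exactly what the paper's Propositions \ref{Proposition, Necessity of local Hardy to mod.} and \ref{Proposition, Necessity of mod to local Hardy.} supply: two families of test functions ($G^h$ over annuli, $F^h$ over separated cells) reducing the embedding to two sharp weighted sequence embeddings, from which all three regimes of $B(p_2,q_1)$ and the strictness condition drop out of Lemma \ref{lemma, Sharpness of embedding, for alpha decomposition}.

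Two further points. First, your ``main obstacle'' is aimed at the wrong regime: the displayed exponent $n(1/2-1/q)$ in case $B_3$ is a typo for $n(1/p-1/q)$ (it must agree with $0\vee n(1-1/p-1/q)\vee n(1/p-1/q)$, and with $n(1/2-1/q)$ the theorem would contradict the separated-bump lower bound already for $\alpha=0$, $p=1$, $q=2$). With the correct exponent, $B_3$ is the \emph{easy} regime, handled by the elementary chain $M^{s,\alpha}_{p_2,q_1}\subset M^{0,\alpha}_{p_2,p_2}\subset h^{p_2}$ via the quasi-triangle inequality; the genuinely $L^2$-flavored regime is $B_2$, which the paper gets by dualizing Theorem \ref{theorem, sharpness of embedding from local Hardy spaces to alpha-modulation spaces} and interpolating with $M^{0,\alpha}_{2,2}=L^2$. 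Second, your sufficiency route (rescaling each dyadic block to unit scale and invoking Theorems B/D) is genuinely different from the paper's and is plausible blockwise, but the summation over $j$ is precisely where the sharp strictness condition must emerge, and your tools do not deliver it when $p_2>2$: a square-function or Minkowski step gives at best an $\ell^{p_2\wedge 2}$ summation in $j$ against the $\ell^{q_1}$ information coming from the modulation side, which for $2<q_1\leqslant p_2$ would force an $\epsilon$-loss that the theorem does not permit. You would need to replace the blockwise summation there by a global duality reduction, which is in effect what the paper does.
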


\begin{theorem}\label{theorem, sharpness of embedding from L1 to alpha-modulation spaces}
  Let $\alpha\in[0,1)$, $0< p, q\leqslant\infty$, $s\in \mathbb{R}$. Then
  \begin{equation}\label{conclusion, L1 embedded in alpha-mod.}
  L^1\subset M^{s,\alpha}_{p,q}
  \end{equation}
  if and only if $1/{p} \leqslant1$ and
  \begin{equation}\label{condtion, L1 embedded in alpha mod.}
    s\leqslant n\alpha(1/p-1)+n(1-\alpha)A(1,q)
  \end{equation}
  with the strict inequality in (\ref{condtion, L1 embedded in alpha mod.}) when $q<\infty$.
\end{theorem}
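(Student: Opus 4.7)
The plan is to prove the two directions separately. The sufficiency is obtained by a direct estimate on the $\alpha$-frequency decomposition of an $L^{1}$ function, while the necessity combines Theorem~\ref{theorem, sharpness of embedding from local Hardy spaces to alpha-modulation spaces} (to pin down the range of $p$) with an explicit concentration test function (to pin down the sharp threshold in $s$).

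For the sufficiency, assume $p\geq 1$ and that $s$ satisfies~(\ref{condtion, L1 embedded in alpha mod.}). Let $\{\eta_k^\alpha\}_k$ be a smooth $\alpha$-partition of unity with centers $\{\xi_k\}$ and supports of diameter $\sim\langle\xi_k\rangle^\alpha$, and write $\Box_k^\alpha f=\mathcal{F}^{-1}(\eta_k^\alpha\widehat{f})$. For $f\in L^{1}$, Young's inequality (available because $p\geq 1$) together with the scaling identity $\|\mathcal{F}^{-1}\eta_k^\alpha\|_{L^p}\sim\langle\xi_k\rangle^{n\alpha(1-1/p)}$ yields
\[
\|\Box_k^\alpha f\|_{L^p}\lesssim \langle\xi_k\rangle^{n\alpha(1-1/p)}\|f\|_{L^{1}}.
\]
Using that the number of centers with $\langle\xi_k\rangle\sim 2^{j}$ is of order $2^{jn(1-\alpha)}$, the weighted $\ell^q$ summation reduces to a geometric series in $j$ whose convergence is precisely the strict form of~(\ref{condtion, L1 embedded in alpha mod.}); the case $q=\infty$ with equality is handled analogously via the supremum.

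For the necessity, the constraint $1/p\leq 1$ is immediate from the continuous inclusion $h^{1}\subset L^{1}$ and Theorem~\ref{theorem, sharpness of embedding from local Hardy spaces to alpha-modulation spaces} with $p_{1}=1$: if $L^{1}\hookrightarrow M^{s,\alpha}_{p,q}$ then $h^{1}\hookrightarrow M^{s,\alpha}_{p,q}$, which already forces $1/p\leq 1$. The sharp upper bound on $s$ and the strictness of~(\ref{condtion, L1 embedded in alpha mod.}) when $q<\infty$ are obtained by testing against the concentration family
\[
f_{N}(x)=N^{n}\phi(Nx),\qquad N\in\mathbb{N},
\]
for a fixed Schwartz bump $\phi$ with $\widehat{\phi}(0)\neq 0$. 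The key observations are that $\|f_{N}\|_{L^{1}}=\|\phi\|_{L^{1}}$ is independent of $N$, and that the slow variation of $\widehat{\phi}(\cdot/N)$ across each $\alpha$-box of radius $\langle\xi_k\rangle^\alpha\leq N^\alpha$ delivers the two-sided estimate
\[
\|\Box_k^\alpha f_{N}\|_{L^p}\sim |\widehat{\phi}(\xi_k/N)|\,\langle\xi_k\rangle^{n\alpha(1-1/p)}
\]
for all $\langle\xi_k\rangle\lesssim N$. A dyadic summation then shows that at the critical value in~(\ref{condtion, L1 embedded in alpha mod.}) one has $\|f_{N}\|_{M^{s,\alpha}_{p,q}}\sim (\log N)^{1/q}\to\infty$ when $q<\infty$, contradicting the embedding; above critical, the blow-up is polynomial in $N$, while the $q=\infty$ boundary stays bounded, consistent with equality being admissible there.

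The principal technical obstacle will be securing the \emph{lower} bound in the two-sided estimate for $\|\Box_k^\alpha f_{N}\|_{L^p}$ uniformly in $k$, which rests on quantifying the approximate constancy of $\widehat{\phi}(\cdot/N)$ on each $\alpha$-box together with the non-vanishing of $\widehat{\phi}$ near the origin and the positivity of $\|\mathcal{F}^{-1}\eta_{0}\|_{L^p}$ for the reference bump $\eta_{0}$. A minor secondary point is the proper handling of the dyadic sum in the quasi-Banach range $0<q<1$ via the $q$-subadditivity of the $\ell^{q}$ quasi-norm.
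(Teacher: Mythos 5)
Your proposal is correct and follows essentially the same route as the paper: Young's inequality together with the count $\#\{k:\langle \xi_k\rangle\sim 2^j\}\sim 2^{jn(1-\alpha)}$ for the sufficiency, and the concentrating family $N^n\phi(N\cdot)$ (the paper's $f_J$ with $\widehat{f_J}=\widehat{f}(\cdot/2^J)$ in Proposition \ref{proposition, necessity of L1 to mod}) to force the strict inequality when $q<\infty$. The one technical obstacle you flag --- the lower bound in $\|\Box_k^{\alpha}f_N\|_{L^p}\sim|\widehat{\phi}(\xi_k/N)|\langle\xi_k\rangle^{n\alpha(1-1/p)}$ --- disappears entirely if you take $\widehat{\phi}\equiv 1$ on a neighbourhood of the origin, since then $\Box_k^{\alpha}f_N=\mathscr{F}^{-1}\eta_k^{\alpha}$ exactly for every $\alpha$-box contained in $B(0,cN)$, which is precisely how the paper argues.
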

\begin{theorem}\label{theorem, sharpness of embedding from alpha-modulation spaces to L1}
  Let $\alpha\in[0,1)$, $0< p, q\leqslant\infty$, $s\in \mathbb{R}$. Then
  \begin{equation}\label{conclusion, alpha-mod. embedded in L1}
  M^{s,\alpha}_{p,q}\subset L^1
  \end{equation}
  if and only if $1/{p} \geqslant1$, and
  \begin{equation}\label{condition, from mod to L1}
  s\geqslant n\alpha(1/p-1)+(1-\alpha)B(1,q)
  \end{equation}
  with the strict inequality in (\ref{condition, from mod to L1}) when $q>1$.
\end{theorem}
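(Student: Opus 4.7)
Since the sufficient conditions in Theorem~\ref{theorem, sharpness of embedding from alpha-modulation spaces to local Hardy spaces} at $p_2=1$ coincide with those stated here, and $h^{1}\hookrightarrow L^{1}$, the sufficiency half of Theorem~\ref{theorem, sharpness of embedding from alpha-modulation spaces to L1} is an immediate consequence of Theorem~\ref{theorem, sharpness of embedding from alpha-modulation spaces to local Hardy spaces}. The substantive content is therefore the necessity half, which demands strictly more than the corresponding necessity for $h^{1}$: we must produce functions $f\in M^{s,\alpha}_{p,q}$ that fail to lie even in the larger space $L^{1}$.

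For the necessity of $1/p\geqslant 1$, the plan is to use a lattice-translation counterexample
\[
f(x)=\sum_{k\in\mathbb{Z}^{n}\setminus\{0\}} |k|^{-\beta}\, \varphi(x-k),\qquad \varphi\in\mathcal{S},
\]
where $\widehat{\varphi}$ is supported in a small neighbourhood of the origin with $\widehat{\varphi}(0)\neq 0$, and $\beta\in(n/p,n)$; such an exponent $\beta$ exists precisely when $p>1$. Because $\widehat{\varphi}$ meets only finitely many $\alpha$-covering blocks near the origin, a standard almost-disjoint-supports estimate will yield $\|f\|_{M^{s,\alpha}_{p,q}}\lesssim\bigl(\sum_k |k|^{-\beta p}\bigr)^{1/p}<\infty$, whereas
$\|f\|_{L^{1}}\geqslant\bigl|\int f\bigr|=\bigl|\widehat{\varphi}(0)\bigr|\sum_k|k|^{-\beta}=\infty$ since $\beta<n$.

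For the necessity of (\ref{condition, from mod to L1}) and for the strict inequality when $q>1$, I plan to adapt the block-atom constructions already used in the proof of Theorem~\ref{theorem, sharpness of embedding from alpha-modulation spaces to local Hardy spaces}. Concretely, take modulated Schwartz bumps $f_{k}$ whose Fourier transform is localized in a single $\alpha$-covering block of scale $\langle k\rangle^{\alpha/(1-\alpha)}$; compute $\|f_{k}\|_{M^{s,\alpha}_{p,q}}$ and $\|f_{k}\|_{L^{1}}$ directly, and then form superpositions $\sum_{k}c_{k}f_{k}$ over a divergent set of blocks, choosing phases so that the atoms interfere constructively on a macroscopic region. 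This forces $\bigl\|\sum_k c_{k}f_{k}\bigr\|_{L^{1}}=\infty$ unless (\ref{condition, from mod to L1}) holds with the stated strictness; the numerology of $n\alpha(1/p-1)+(1-\alpha)B(1,q)$ then matches automatically by $\alpha$-parametric scaling across dyadic frequency annuli.

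The principal obstacle is the $L^{1}$ lower bound in the strict-inequality step: in the $h^{1}$ version one can invoke maximal-function estimates on well-separated atoms, but here one must exhibit a genuine pointwise constructive-interference argument to keep the $L^{1}$-norm from collapsing. This will likely require choosing the centres of the $\alpha$-covering blocks and the phases of $f_{k}$ in coordinated arithmetic progressions, so that the superposition $\sum_{k}c_{k}f_{k}$ exhibits a non-cancelling plateau on a set of positive measure.
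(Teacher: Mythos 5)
Your sufficiency argument is exactly the paper's: Theorem \ref{theorem, sharpness of embedding from alpha-modulation spaces to local Hardy spaces} at $p_2=1$ composed with $h^1\subset L^1$. Your lattice-sum counterexample for the necessity of $1/p\geqslant 1$ is a valid alternative to the paper's dilation argument (take $\varphi=|\psi|^2$ with $\widehat{\psi}$ supported near the origin so that $f\geqslant 0$ and the divergence of $\|f\|_{L^1}$ is honest rather than inferred from a Fourier coefficient of a non-integrable function). The genuine gap is in your plan for the remaining necessity, i.e.\ for forcing \eqref{condition, from mod to L1} with strictness when $q>1$. Constructive interference is the wrong mechanism for an $L^1$ lower bound: if you stack frequency-localized atoms $c_kf_k$ at a common spatial location with aligned phases, you only get a large value of $|F|$ on the (tiny) region where all the oscillations $e^{2\pi i\xi_k\cdot x}$ stay coherent; integrating $|F|$ over $\mathbb{R}^n$ then loses almost everything to cancellation, and Khintchine-type heuristics suggest you would at best recover an $\ell^2$-type lower bound on the coefficients, which is strictly weaker than the $\ell^1$ lower bound the sharp condition requires. (Pointwise stacking is precisely what the paper uses for the $L^\infty$ target in Proposition \ref{proposition, necessity of mod to L-infty}, where evaluating at a single point suffices; it does not transfer to $L^1$.)

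The correct and much simpler device — the one the paper invokes by reducing to the method of Proposition \ref{Proposition, Necessity of mod to local Hardy.} — is \emph{separation}, not interference: translate the atoms far apart in physical space, $f_k^h(x)=f_k(x-kh)$ and $g_j^h(x)=g_j(x+jhe_0)$, and let $h\to\infty$. For a finite sum one then has $\|\sum_k b_kf_k^h\|_{L^1}\to\sum_k|b_k|\,\|f_k\|_{L^1}$, because the translation leaves each $\Box_k^{\alpha}$-block (hence the $M^{s,\alpha}_{p,q}$-norm) unchanged while the $L^1$ masses of well-separated bumps add with no cancellation at all. This yields the two sequence-space embeddings $l_{q}^{n(1-\alpha)/q+n\alpha(1-1/p)+s,1}\subset l_{1}^{0,1}$ and $l_{q}^{n\alpha(1-1/p)+s,\alpha}\subset l_{1}^{0,\alpha}$, and Lemma \ref{lemma, Sharpness of embedding, for alpha decomposition} then delivers $s\geqslant n\alpha(1/p-1)+(1-\alpha)B(1,q)$ with strict inequality exactly when $q>1$. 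I recommend replacing your interference construction with this translation argument; as written, the "principal obstacle" you identify is real and your proposed resolution would not close it.
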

\begin{theorem}\label{theorem, sharpness of embedding from L-infinity to alpha-modulation spaces}
  Let $\alpha\in[0,1)$, $0< p, q\leqslant\infty$, $s\in \mathbb{R}$. Then
  \begin{equation}\label{conclusion, L-infty embedded in alpha-mod.}
  L^{\infty}\subset M^{s,\alpha}_{p,q}
  \end{equation}
  if and only if $p=\infty$ and
  \begin{equation}\label{condtion, L-infty embedded in alpha-mod.}
    s\leqslant n(1-\alpha)A(\infty,q)
  \end{equation}
  with the strict inequality in (\ref{condtion, L-infty embedded in alpha-mod.}) when $q<\infty$.
\end{theorem}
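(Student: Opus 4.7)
The plan is to decouple the two necessary conditions ($p=\infty$ and the sharp bound on $s$) from the sufficient direction, treating this theorem as the boundary case $p_1=\infty$ not covered by Theorem \ref{theorem, sharpness of embedding from local Hardy spaces to alpha-modulation spaces}. A preliminary observation: $A(\infty,q)=-n/q$ for every $q\in(0,\infty]$ (it is the minimum of $0$, $n(1-1/q)$ and $-n/q$), so \eqref{condtion, L-infty embedded in alpha-mod.} reads $s\leqslant-n(1-\alpha)/q$ in concrete terms, collapsing to $s\leqslant 0$ at $q=\infty$.

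For the sufficient direction I would write $\Box_k^\alpha f=\mathcal{F}^{-1}\phi_k^\alpha\ast f$, invoke the rescaling $\mathcal{F}^{-1}\phi_k^\alpha(x)=\langle\xi_k\rangle^{n\alpha}e^{i\xi_k\cdot x}\widetilde\psi(\langle\xi_k\rangle^\alpha x)$ for a fixed Schwartz $\widetilde\psi$, and apply Young's inequality to get $\|\Box_k^\alpha f\|_\infty\leqslant C\|f\|_\infty$ uniformly in $k$. Combined with the standard counting estimate that the number of indices $k$ with $\langle\xi_k\rangle\sim 2^j$ is of order $2^{jn(1-\alpha)}$, this yields
\[
\|f\|_{M^{s,\alpha}_{\infty,q}}\leqslant C\|f\|_\infty\Bigl(\sum_j 2^{j(sq+n(1-\alpha))}\Bigr)^{1/q},
\]
which is finite precisely under the stated strict inequality for $q<\infty$, and under $s\leqslant 0$ for $q=\infty$.

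For necessity I would first rule out $p<\infty$ by a pure exponential: taking $f(x)=e^{i\xi_0\cdot x}\in L^\infty$ with $\xi_0$ in the support of some $\phi^\alpha_{k_0}$, one has $\Box^\alpha_{k_0}f=\phi^\alpha_{k_0}(\xi_0)e^{i\xi_0\cdot x}$, whose $L^p$-norm is infinite for any $p<\infty$, so $\|f\|_{M^{s,\alpha}_{p,q}}=\infty$ regardless of $s$ and $q$. For the sharp bound on $s$, I would construct an $L^\infty$ test function of the form
\[
f(x)=\sum_k e^{i\xi_k\cdot x}\,\eta\bigl(\langle\xi_k\rangle^\alpha(x-x_k)\bigr),
\]
with $\eta$ a Schwartz function whose Fourier transform is supported in a tiny ball around the origin (so each summand is essentially a single $\Box^\alpha_k$-atom) and the centers $x_k\in\mathbb{R}^n$ arranged so that the spatial bumps have essentially disjoint supports. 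Such a packing is feasible because each bump has effective radius $\langle\xi_k\rangle^{-\alpha}$ whereas only $\sim\langle\xi_k\rangle^{n(1-\alpha)}$ indices arise at a given frequency scale, leaving ample room in $\mathbb{R}^n$. Granted $\|f\|_\infty\lesssim 1$ and $\|\Box^\alpha_k f\|_\infty\gtrsim 1$ uniformly in $k$, the same counting yields
\[
\|f\|_{M^{s,\alpha}_{\infty,q}}^q\gtrsim \sum_j 2^{j(sq+n(1-\alpha))},
\]
which diverges at the endpoint $s=-n(1-\alpha)/q$ when $q<\infty$; a supremum version rules out $s>0$ when $q=\infty$.

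The main obstacle is this last verification: the Schwartz tails of the bumps attached to neighboring frequencies $\xi_{k'}$ smear across $\mathbb{R}^n$ but decay rapidly, and one must arrange the spatial separation of the $x_k$ (exploiting the rapid decay of $\eta$) so that both the uniform $L^\infty$ bound on $f$ and the uniform lower bound on each $\|\Box^\alpha_k f\|_\infty$ survive. This is a geometric-combinatorial packing argument; once it is in place, the remainder of the proof is routine bookkeeping.
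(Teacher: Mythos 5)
Your proposal is correct, and while the sufficiency half coincides with the paper's argument, the necessity half travels a genuinely different road. For sufficiency both proofs do the same thing: bound $\|\Box_k^{\alpha}f\|_{L^{\infty}}\lesssim\|f\|_{L^{\infty}}$ uniformly in $k$ by Young's inequality and then sum the weights; the paper sums $\langle k\rangle^{sq/(1-\alpha)}$ over $k\in\mathbb{Z}^n$ directly (convergent iff $sq/(1-\alpha)<-n$, i.e.\ $s<-n(1-\alpha)/q$), and your dyadic regrouping with $\sim 2^{jn(1-\alpha)}$ indices per frequency scale is the identical computation. For necessity the paper derives $p=\infty$ from the scaling family $\widehat{f_{\lambda}}=\widehat{f}(\cdot/\lambda)$, $\lambda\to 0^{+}$, and gets the bound on $s$ by feeding the translated family $F^{h}=\sum_k b_k f_k(\cdot-kh)$ into the embedding and letting $h\to\infty$, so that the cross terms vanish in the limit and everything reduces to the sequence embedding $l_{\infty}^{n\alpha,\alpha}\subset l_{q}^{n\alpha+s,\alpha}$, which Lemma \ref{lemma, Sharpness of embedding, for alpha decomposition} settles (giving the strict inequality for $q<\infty$ and the non-strict one for $q=\infty$ in one stroke). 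You instead kill $p<\infty$ with the pure exponential $e^{i\xi_0\cdot x}$ — cleaner, since it exhibits an explicit $L^{\infty}$ function outside $M^{s,\alpha}_{p,q}$ without even invoking the norm inequality — and witness the divergence at the endpoint with a single fixed, spatially separated superposition. This works, but the packing verification you defer (uniform $L^{\infty}$ bound on the infinite sum together with $\|\Box_k^{\alpha}f\|_{\infty}\gtrsim 1$) is precisely the step the paper's $h\to\infty$ limit is designed to bypass; it does go through if you use the decomposition described in the remark following the definition of $M_{p,q}^{s,\alpha}$ (for which $\eta_k^{\alpha}\equiv 1$ and $\eta_{k'}^{\alpha}\equiv 0$, $k'\neq k$, on a small ball about the center $\langle k\rangle^{\alpha/(1-\alpha)}k$, so that $\Box_k^{\alpha}f$ is exactly the $k$-th bump once $\widehat{\eta}$ has small enough support) and place the centers $x_k$ on a $1$-separated set so that $\sum_k(1+|x-x_k|)^{-M}\lesssim 1$. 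Two further remarks: only the $\alpha$-decomposition test family is needed here, since $A(\infty,q)=-n/q$ is the binding constraint (the paper's dyadic family $G^{h}$ only yields the weaker $s<n(1-\alpha)(1-1/q)$); and your reading of the threshold as $(1-\alpha)A(\infty,q)=-n(1-\alpha)/q$ is the one consistent with the paper's own proof — the extra factor $n$ in the statement of the theorem is a typo.
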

\begin{theorem}\label{theorem, sharpness of embedding from alpha-modulation spaces to L-infinity}
  Let $\alpha\in[0,1)$, $0< p, q\leqslant\infty$, $s\in \mathbb{R}$. Then
  \begin{equation}\label{conclusion, alpha-mod. embedded in L-infty}
  M^{s,\alpha}_{p,q}\subset L^{\infty}
  \end{equation}
  if and only if
  \begin{equation*}
  s\geqslant n\alpha/p+n(1-\alpha)B(\infty,q),
  \end{equation*}
  with strict inequality when $q>1$.
\end{theorem}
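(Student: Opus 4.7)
The proof follows the template of the earlier theorems in this paper and the $\alpha=0$ result of Theorem D. I would separate sufficiency and necessity.

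\emph{Sufficiency.} Write $f=\sum_k\Box_k^\alpha f$ using the $\alpha$-decomposition operators $\Box_k^\alpha=\eta_k^\alpha(D)$, where $\eta_k^\alpha$ is supported in a ball of radius $\sim\langle\xi_k\rangle^\alpha$ centred at $\xi_k$ with $|\xi_k|\sim\langle k\rangle^{1/(1-\alpha)}$. Bernstein's inequality gives
$$\|\Box_k^\alpha f\|_\infty\lesssim\langle\xi_k\rangle^{n\alpha/p}\|\Box_k^\alpha f\|_p,$$
which is trivial when $p=\infty$. For $q\le 1$ the embedding $\ell^q\hookrightarrow\ell^1$ handles $\sum_k\|\Box_k^\alpha f\|_\infty$ whenever $s\ge n\alpha/p$ (consistent with $B(\infty,q)=0$ in this range). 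For $q>1$ apply H\"older with conjugate exponent $q'$, reducing matters to the convergence of $\sum_k\langle\xi_k\rangle^{(n\alpha/p-s)q'}$; using that the number of indices with $\langle\xi_k\rangle\sim 2^j$ is $\sim 2^{jn(1-\alpha)}$, this series converges precisely under the stated strict inequality.

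\emph{Necessity.} I would exhibit two families of test functions, in the spirit of the constructions used for Theorems A--D. First, a single $\alpha$-ball bump
$$f_R(x)=R^{n\alpha}\psi(R^\alpha x)e^{ix\cdot\xi_0},\qquad|\xi_0|\sim R,\ \psi\in\mathcal{S}(\mathbb{R}^n),$$
satisfies $\|f_R\|_\infty\sim R^{n\alpha}$ and $\|f_R\|_{M^{s,\alpha}_{p,q}}\sim R^{s+n\alpha(1-1/p)}$, forcing $s\ge n\alpha/p$ as $R\to\infty$. Second, summing such bumps with aligned phases over $K_R=\{k:|\xi_k|\sim R\}$ with $|K_R|\sim R^{n(1-\alpha)}$ gives $g_R$ with $g_R(0)\sim R^n$; essential disjointness of the supports yields $\|g_R\|_{M^{s,\alpha}_{p,q}}\sim R^{s+n\alpha(1-1/p)+n(1-\alpha)/q}$ for $q<\infty$ (with the obvious $q=\infty$ variant), forcing $s\ge n\alpha/p+n(1-\alpha)(1-1/q)$. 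For the strict inequality at the endpoint when $q>1$, a lacunary superposition $f=\sum_j c_j g_{2^j}$ with slowly decaying coefficients $c_j$ renders the modulation norm summable while preserving the $L^\infty$-contribution at each scale, producing an unbounded function of finite $M^{s,\alpha}_{p,q}$-norm.

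The most delicate point is the second construction: ensuring constructive interference at the origin and that the $\alpha$-decomposition supports of the pieces are essentially disjoint, so that $\|g_R\|_{M^{s,\alpha}_{p,q}}$ really splits as the $\ell^q$-sum of the individual pieces. Both reduce to choosing $\psi$ with $\psi(0)>0$ and $\widehat\psi$ supported in $B(0,1)$, which is standard in $\alpha$-decomposition theory; once the building blocks are in place, the endpoint lacunary argument is routine book-keeping.
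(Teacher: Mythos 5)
Your proposal is correct and follows essentially the same route as the paper: the sufficiency is the Bernstein--H\"older computation that the paper packages as $\|f\|_{L^\infty}\leqslant\|f\|_{M^{0,\alpha}_{\infty,1}}\lesssim\|f\|_{M^{s,\alpha}_{p,q}}$ via its embedding lemma, and your test functions (single $\alpha$-bump, annulus sum with aligned phases, lacunary superposition at the endpoint) are exactly the evaluations at the origin of $F=\sum_k a_kf_k$ that the paper uses in Proposition \ref{proposition, necessity of mod to L-infty} to reduce matters to the sequence embedding $l_q^{s+n\alpha(1-1/p),\alpha}\subset l_1^{n\alpha,\alpha}$ and Lemma \ref{lemma, Sharpness of embedding, for alpha decomposition}. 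The only difference is presentational: you unwind the sequence-space sharpness by hand instead of quoting the lemma.
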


\begin{remark}
  Obviously, our theorems generalize the main results in \cite{Kobayashi-Miyachi-Tomita-Studia-2009} and \cite{Kobayashi-Sugimoto-JFA-2011}.
  If we choose $p_1=p_2$ and $\alpha=0$ in our theorems, we regain the results obtained in \cite{Kobayashi-Miyachi-Tomita-Studia-2009, Kobayashi-Sugimoto-JFA-2011}.
  Furthermore, we use a quite different method to achieve our goals.
  For the necessity part, we reduce the embedding between local Hardy and $\alpha$-modulation spaces
  to the corresponding embedding associated with discrete sequences.
  For the sufficiency, we estimate the $h^p$ atoms under the "standard" norm of $\alpha$-modulation spaces.
  Our method also works well in the special case $\alpha=0$.
  Without using an equivalent norm of modulation space (see Lemma 2.2 in \cite{Kobayashi-Miyachi-Tomita-Studia-2009}),
  our method seems more readable and efficient.
  We also remark that a similar embedding problem (the case $p_1=p_2$) is studied by T. Kato in his doctoral thesis \cite{Kato_thesis}, in which he use the same method as in \cite{Kobayashi-Miyachi-Tomita-Studia-2009}.
  Due to the generality of $p_1, p_2$ and the quite different method, our work has its own interesting.
\end{remark}

\begin{remark}
  Recalling $h^p\sim L^p$ for $p\in (1,\infty)$. Except for some endpoints, it is convenient to deal with the case $p\leqslant 1$ and $p\geqslant 1$ together.
  Thus, we establish Theorem \ref{theorem, sharpness of embedding from local Hardy spaces to alpha-modulation spaces} and Theorem \ref{theorem, sharpness of embedding from alpha-modulation spaces to local Hardy spaces}
  for all the embedding between $\alpha$-modulation and local Hardy spaces in the full range. Then, we use Theorem \ref{theorem, sharpness of embedding from L1 to alpha-modulation spaces}
  to \ref{theorem, sharpness of embedding from alpha-modulation spaces to L-infinity} to deal with the endpoint cases.
\end{remark}

\begin{remark}
  As mentioned before, Besov space can be regarded as the special $\alpha$-modulation space with $\alpha=1$.
  However, the embedding relations with $\alpha=1$ are the special cases of embedding between Besov and Triebel-Lizorkin spaces, which have been fully studied.
  So we focus on $\alpha\in [0,1)$ in this paper.
\end{remark}

Our paper is organized as follows. In Section 2, we give some
definitions of function spaces treated in this paper. We also collect some
basic properties which is useful in our proof.
Section 3 is prepared for some estimates of $h^p$-atoms, which lead to the key embedding relations of our theorems.
The proofs of our main results will be given in Section 4 to 7.

\section{Preliminaries and Definitions}
Let $\mathscr{S}:= \mathscr{S}(\mathbb{R}^{n})$ be the Schwartz space
and $\mathscr{S}':=\mathscr{S}'(\mathbb{R}^{n})$ be the space of tempered distributions.
Let $\mathbb{N}_0:=\mathbb{Z}^{+}\bigcup\{0\}$ be the collection of all nonnegative integers.
We define the Fourier transform $\mathscr {F}f$ and the inverse Fourier transform $\mathscr {F}^{-1}f$ of $f\in \mathscr{S}(\mathbb{R}^{n})$ by
$$
\mathscr {F}f(\xi)=\hat{f}(\xi)=\int_{\mathbb{R}^{n}}f(x)e^{-2\pi ix\cdot \xi}dx
,
~~
\mathscr {F}^{-1}f(x)=\hat{f}(-x)=\int_{\mathbb{R}^{n}}f(\xi)e^{2\pi ix\cdot \xi}d\xi.
$$

The notation $X\lesssim Y$ denotes the statement that $X\leqslant CY$, with a positive constant $C$ that may depend on $n,\alpha, \,p_{i},\,q_{i},\,s_{i}(i=1,\,2)$,
but it might be different from line to line.
The notation $X\sim Y$ means the statement $X\lesssim Y\lesssim X$.
And the notation $X\simeq Y$ stands for $X=CY$.
For a multi-index $k=(k_{1},k_{2},...,k_{n})\in \mathbb{Z}^{n}$,
we denote $|k|_{\infty }:=\max\limits_{i=1,2,...,n}|k_{i}|$ and $\langle k\rangle:=(1+|k|^{2})^{{1}/{2}}.$

We recall some definitions of the function spaces treated in this paper.

First we give the partition of unity on frequency space associated with $\alpha\in [0,1)$.
Take two appropriate constants $c>0$ and $C>0$ and choose a Schwartz function sequence $\{\eta_k^{\alpha}\}_{k\in \mathbb{Z}^n}$
satisfying
\begin{equation}\label{decompositon function of alpha modulation spaces}
\begin{cases}
|\eta_k^{\alpha}(\xi)|\geqslant 1,   \text{~if~} |\xi-\langle k\rangle^{\frac{\alpha}{1-\alpha}}k|<c\langle k\rangle^{\frac{\alpha}{1-\alpha}};\\
\text{supp}\eta_k^{\alpha}\subset \{\xi\in\mathbb{R}^n: |\xi-\langle k\rangle^{\frac{\alpha}{1-\alpha}}k|<C\langle k\rangle^{\frac{\alpha}{1-\alpha}}\};\\
\sum_{k\in \mathbb{Z}^{n}}\eta_k^{\alpha}(\xi)\equiv 1, \forall \xi\in \mathbb{R}^{n};\\
|\partial^{\gamma}\eta_k^{\alpha}(\xi)|\leqslant C_{|\alpha|}\langle k\rangle^{-\frac{\alpha|\gamma|}{1-\alpha}} , \forall \xi\in \mathbb{R}^{n}, \gamma \in(\mathbb{Z}^{+}\cup\{0\})^{n}.
\end{cases}
\end{equation}
The sequence $\{\eta_{k}^{\alpha}\}_{k\in\mathbb{Z}^{n}}$ constitutes a smooth decomposition of $\mathbb{R}^{n}$.
The  frequency decomposition operators associated with the above function sequence are defined by
\begin{equation}
\Box_{k}^{\alpha}:= \mathscr {F}^{-1}\eta_{k}^{\alpha}\mathscr{F}
\end{equation}
for $k\in \mathbb{Z}^{n}$.
Let $0< p,q \leqslant \infty$, $s\in \mathbb{R}$, $\alpha \in [0,1)$. Then the $\alpha$-modulation space associated with above decomposition is defined by
\begin{equation}
M_{p,q}^{s,\alpha}(\mathbb{R}^n)=\{f\in \mathscr {S}'(\mathbb{R}^{n}): \|f\|_{M_{p,q}^{s,\alpha}(\mathbb{R}^n)}
=\left( \sum_{k\in \mathbb{Z}^{n}}\langle k\rangle ^{\frac{sq}{1-\alpha}}\|\Box_k^{\alpha} f\|_{L^p}^{q}\right)^{{1}/{q}}<\infty \}
\end{equation}
with the usual modifications when $q=\infty$. For simplicity, we write $M_{p,q}^s=M_{p,q}^{s,0}$, $M_{p,q}=M_{p,q}^{0,0}$ and $\eta_k(\xi)=\eta_k^0(\xi)$.

\begin{remark}
We recall that the above definition is independent of the choice of exact $\eta_k^{\alpha}$ (see \cite{Han-Wang_embedding-JMSJ-2014}).
Also, for sufficiently small $\delta>0$,
one can construct a function sequence $\{\eta_{k}^{\alpha}\}_{k\in\mathbb{Z}^{n}}$ such that
$\eta_k^{\alpha}(\xi)=1$, and $\eta_k^{\alpha}(\xi)\eta_l^{\alpha}(\xi)=0$ if $k\neq l$
and $\xi$ lies in the ball $B(\langle k\rangle^{\frac{\alpha}{1-\alpha}}k,\langle k\rangle^{\frac{\alpha}{1-\alpha}}\delta)$ (see \cite{Forasier, Guo_Chen}).
\end{remark}

\begin{lemma}[Embedding of $\alpha$-modulation spaces]\label{Lemma, embedding of alpha modulation spaces}
  Let $0<p_1,\,p_2,\,q_1,\,q_2\leqslant \infty$, $s_1,\,s_2\in \mathbb{R}$, $\alpha\in[0,1)$.
  Then $M_{p_1,q_1}^{s_1,\alpha}\subset M_{p_2,q_2}^{s_2,\alpha}$ if and only if
  \begin{equation}
    \begin{cases}
      \frac{1}{p_2}\leqslant\frac{1}{p_1}\\
      \frac{1}{q_2}\leqslant\frac{1}{q_1}\\
      \frac{s_2}{n}-\frac{\alpha}{p_2}\leqslant\frac{s_1}{n}-\frac{\alpha}{p_1}
    \end{cases}
    \text{or}\hspace{5mm}
    \begin{cases}
      \frac{1}{p_2}\leqslant\frac{1}{p_1}\\
      \frac{1}{q_2}>\frac{1}{q_1}\\
      \frac{s_2}{n}-\frac{\alpha}{p_2}+\frac{1-\alpha}{q_2}<\frac{s_1}{n}-\frac{\alpha}{p_1}+\frac{1-\alpha}{q_1}.
    \end{cases}
  \end{equation}
\end{lemma}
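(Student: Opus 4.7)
My plan is to split the argument into sufficiency (building the embedding from the three inequalities) and necessity (forcing each inequality from the embedding). The engine driving sufficiency is the Bernstein/Nikolskii inequality adapted to $\alpha$-blocks: since $\widehat{\Box_k^\alpha f}$ is supported in a ball of radius $\sim\langle k\rangle^{\alpha/(1-\alpha)}$, one has
\begin{equation*}
\|\Box_k^\alpha f\|_{L^{p_2}}\lesssim \langle k\rangle^{\tfrac{n\alpha}{1-\alpha}(1/p_1-1/p_2)}\|\Box_k^\alpha f\|_{L^{p_1}}
\end{equation*}
whenever $1/p_2\leq 1/p_1$. Multiplying by $\langle k\rangle^{s_2/(1-\alpha)}$ and invoking $s_2/n-\alpha/p_2\leq s_1/n-\alpha/p_1$ returns the pointwise bound $\langle k\rangle^{s_2/(1-\alpha)}\|\Box_k^\alpha f\|_{L^{p_2}}\lesssim \langle k\rangle^{s_1/(1-\alpha)}\|\Box_k^\alpha f\|_{L^{p_1}}$; the $\ell^q$-step is then handled either by the monotonicity $\ell^{q_1}\subset\ell^{q_2}$ in the first branch, or, in the second branch, by H\"{o}lder's inequality in exponents $q_1/q_2$ and $q_1/(q_1-q_2)$ against a weight $\langle k\rangle^{\gamma}$. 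The residual factor $\sum_{k\in\mathbb{Z}^n}\langle k\rangle^{-r}$ converges iff $r>n$, and working out the arithmetic returns exactly the strict condition $s_2/n-\alpha/p_2+(1-\alpha)/q_2<s_1/n-\alpha/p_1+(1-\alpha)/q_1$.

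For necessity, three test functions built from a fixed Schwartz bump $\phi$ (with $\widehat{\phi}$ supported in a sufficiently small ball around the origin) do the work. To force $1/p_2\leq 1/p_1$, take $f=\sum_{j}c_j\phi(\cdot-x_j)$ with well-separated translates $x_j$: the Fourier support of $f$ sits inside the single $0$-th $\alpha$-block, so both modulation norms reduce to $L^p$-norms, while the separation lemma gives $\|f\|_{L^p}\sim\|c\|_{\ell^p}\|\phi\|_{L^p}$, forcing $\ell^{p_1}\subset\ell^{p_2}$. To force the $s$-condition, use the modulated rescaled bump
\begin{equation*}
f_k(x)=e^{2\pi i\langle k\rangle^{\alpha/(1-\alpha)}k\cdot x}\,\langle k\rangle^{n\alpha/(1-\alpha)}\phi\bigl(\langle k\rangle^{\alpha/(1-\alpha)}x\bigr),
\end{equation*}
which lies in the single $k$-th $\alpha$-block and satisfies $\|f_k\|_{M_{p,q}^{s,\alpha}}\sim\langle k\rangle^{(s+n\alpha(1-1/p))/(1-\alpha)}\|\phi\|_{L^p}$; comparing both norms as $|k|\to\infty$ returns $s_2/n-\alpha/p_2\leq s_1/n-\alpha/p_1$. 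Finally, to force the strict inequality in the second branch, take a lacunary superposition $\sum_m a_m f_{k_m}$ indexed by $k_m\in\mathbb{Z}^n$, with $a_m$ placed at the threshold of $\ell^{q_1}$-summability but just outside $\ell^{q_2}$-summability after the relevant power weight is absorbed; the equality-case construction shows that the embedding must fail unless the sharp condition is strict.

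The main obstacle I anticipate is bookkeeping: the $\alpha$-dependent exponents $\langle k\rangle^{\alpha/(1-\alpha)}$ (scale of the block), $\langle k\rangle^{s/(1-\alpha)}$ (weight), and the $\mathbb{Z}^n$ lattice density must align so that the $p$-Bernstein loss, the $q$-H\"{o}lder penalty, and the $s$-weight hypothesis recombine into the single inequalities displayed in the lemma. A secondary subtlety is verifying that the test function $f_k$ is genuinely supported in a single block rather than spilling into neighbors: this is harmless provided $\widehat{\phi}$ is supported in a sufficiently small ball relative to the constants $c,C$ in \eqref{decompositon function of alpha modulation spaces}, but must be stated explicitly.
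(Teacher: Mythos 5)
Your proposal is correct. The paper itself omits the proof of this lemma (deferring to \cite{Guo-Fan-Wu-Zhao_embedding} and to ``the same method used in the proof of embedding between modulation spaces''), and your argument --- Nikolskii's inequality on the $\alpha$-blocks combined with $\ell^q$-monotonicity or H\"older's inequality for sufficiency, and single-block bumps, separated translates, and the reduction to weighted sequence embeddings (the content of Lemma \ref{lemma, Sharpness of embedding, for alpha decomposition}) for necessity --- is exactly that standard method, the same toolkit the paper deploys in Propositions \ref{Proposition, Necessity of local Hardy to mod.} and \ref{Proposition, Necessity of mod to local Hardy.}.
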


The above embedding lemma can be verified by the same method used in the proof of embedding between modulation spaces. We omit the details here. One can see \cite{Guo-Fan-Wu-Zhao_embedding} for a more general case..
Next, we give some definitions and properties of sequences associated with $\alpha$.

\begin{definition}
Let $0<p,q\leqslant \infty$, $s\in \mathbb{R}$, $\alpha\in[0,1)$.
Let $\vec{a}:=\{a_k\}_{k\in\mathbb{Z}^n}$ be a sequence, we denote its $l_p^{s,\alpha}$ (quasi-)norm
\begin{numcases}{\|\vec{a}\|_{l_p^{s,\alpha}}=}
\left(\sum_{k\in \mathbb{Z}^n}|a_k|^p \langle k\rangle^{\frac{sp}{1-\alpha}}\right)^{1/p}, &$p<\infty$
\\
\sup_{k\in \mathbb{Z}^n}|a_k|\langle k\rangle^{\frac{s}{1-\alpha}}, \hspace{15mm}&$p=\infty$
\end{numcases}
and let $l_p^{s,\alpha}$ be the (quasi-)Banach space of sequences whose $l_p^{s,\alpha}$ (quasi-)norm is finite.
Let $\vec{b}:=\{b_j\}_{j\in \mathbb{N}}$ be a sequence, we denote its $l_p^{s,1}$ (quasi-)norm
\begin{numcases}{\|\vec{b}\|_{l_p^{s,1}}=}
\left(\sum_{j\in \mathbb{N}_0}|b_j|^p 2^{jsp}\right)^{1/p}, &$p<\infty$
\\
\sup_{j\in \mathbb{N}_0}|b_j|2^{js}, \hspace{15mm}&$p=\infty$
\end{numcases}
and let $l_p^{s,1}$ be the (quasi-)Banach space of sequences whose $l_p^{s,1}$ (quasi-)norm is finite.
\end{definition}

We also give the following lemma for the embedding about sequences defined above. The proof is quite standard, we omit the details here.

\begin{lemma}[Sharpness of embedding, for $\alpha$-decomposition] \label{lemma, Sharpness of embedding, for alpha decomposition}
Suppose $0<q_1, q_2\leqslant \infty$, $s_1, s_2\in \mathbb{R}$, $\alpha\in [0,1]$. Then
\begin{equation}
l_{q_1}^{s_1,\alpha}\subset l_{q_2}^{s_2,\alpha}
\end{equation}
holds if and only if
\begin{equation}
(1-\alpha)/q_2+s_2/n<(1-\alpha)/q_1+s_1/n
\hspace{10mm}
\text{or} \hspace{10mm}
\begin{cases}
s_2= s_1,\\
1/q_2\leqslant 1/q_1.
\end{cases}
\end{equation}
\end{lemma}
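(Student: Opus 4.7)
The plan is to recast the embedding $\ell_{q_1}^{s_1,\alpha}\subset\ell_{q_2}^{s_2,\alpha}$ as a standard weighted $\ell^q$ inclusion by the change of variable $b_k=a_k\langle k\rangle^{s_1/(1-\alpha)}$. For $\alpha\in[0,1)$ this reduces the target to $\|b_k\langle k\rangle^\sigma\|_{\ell^{q_2}(\mathbb{Z}^n)}\lesssim\|b_k\|_{\ell^{q_1}(\mathbb{Z}^n)}$, where $\sigma:=(s_2-s_1)/(1-\alpha)$; for $\alpha=1$ the same substitution on $\mathbb{N}_0$ turns the weight into $2^{j(s_2-s_1)}$. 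I will carry out the argument in detail for $\alpha<1$ and merely indicate that the $\alpha=1$ case is strictly simpler because the index set is one-dimensional and the weight is geometric.

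For sufficiency, when the second clause of the conclusion holds ($s_2=s_1$ and $1/q_2\leqslant 1/q_1$), the weight disappears and the assertion collapses to the standard inclusion $\ell^{q_1}\subset\ell^{q_2}$ on sequences. When the strict inequality holds, I would split on the size of $q_2$ relative to $q_1$: if $q_2\geqslant q_1$, then strict inequality combined with $(1-\alpha)/q_2\leqslant(1-\alpha)/q_1$ forces $\sigma\leqslant 0$, so $\langle k\rangle^\sigma$ is bounded and the inclusion $\ell^{q_1}\subset\ell^{q_2}$ finishes the job; if $q_2<q_1$, H\"older's inequality with exponent $r$ defined by $1/r=1/q_2-1/q_1$ gives $\|b\langle\cdot\rangle^\sigma\|_{\ell^{q_2}}\leqslant\|b\|_{\ell^{q_1}}\|\langle\cdot\rangle^\sigma\|_{\ell^r}$, and the strict inequality $s_2/n+(1-\alpha)/q_2<s_1/n+(1-\alpha)/q_1$ is precisely equivalent to $\sigma r<-n$, which makes $\langle k\rangle^\sigma$ lie in $\ell^r(\mathbb{Z}^n)$.

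For necessity I would test on two families of sequences. Plugging the Kronecker delta $a_k=\delta_{k,k_0}$ into the assumed inequality yields $\langle k_0\rangle^{(s_2-s_1)/(1-\alpha)}\lesssim 1$ for every $k_0$, forcing $s_2\leqslant s_1$; when $s_2=s_1$, testing on the indicator sequence of a dyadic annulus $\{k:R\leqslant|k|<2R\}$ gives $R^{n/q_2}\lesssim R^{n/q_1}$, which forces $1/q_2\leqslant 1/q_1$. To exclude equality in the $s$-condition when $1/q_2>1/q_1$, I would use the truncated power sequence $a_k=\langle k\rangle^{-s_1/(1-\alpha)-n/q_1}\chi_{\{|k|\leqslant N\}}$: a direct computation shows $\|a\|_{\ell_{q_1}^{s_1,\alpha}}\sim(\log N)^{1/q_1}$, whereas $\|a\|_{\ell_{q_2}^{s_2,\alpha}}$ grows at least like $(\log N)^{1/q_2}$ at the critical line (and like a positive power of $N$ above it), so the ratio blows up as $N\to\infty$ unless the strict inequality is in force.

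The main obstacle is not analytic but combinatorial: assembling the four subregions of parameter space (each clause of the "or" intersected with $q_2\geqslant q_1$ or $q_2<q_1$) into a single clean if-and-only-if, and in particular checking that the strict inequality absorbs every configuration in which $s_2<s_1$, so that the second clause is only needed at the exceptional line $s_2=s_1$. None of the individual computations is deep, but the reconciliation between the algebraic form of the stated condition and the test-sequence analysis is exactly what makes this lemma "standard but tedious" rather than immediate.
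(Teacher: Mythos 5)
The paper offers no proof of this lemma to compare against (it is dismissed as ``quite standard''), so your proposal must stand on its own, and there it has one genuine gap, located in the sufficiency argument for the strict-inequality clause when $q_2\geqslant q_1$. You assert that the strict inequality $(1-\alpha)/q_2+s_2/n<(1-\alpha)/q_1+s_1/n$, combined with $(1-\alpha)/q_2\leqslant(1-\alpha)/q_1$, ``forces $\sigma\leqslant 0$.'' This is a non sequitur: from $A+B<C+D$ and $A\leqslant C$ one cannot conclude $B\leqslant D$. Concretely, take $\alpha<1$, $q_1=1$, $q_2=\infty$, $s_1=0$, $s_2=n(1-\alpha)/2$. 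The strict inequality holds, yet $\sigma=n/2>0$, and the embedding $l_{1}^{0,\alpha}\subset l_{\infty}^{s_2,\alpha}$ fails --- by exactly the Kronecker-delta test you deploy in your own necessity argument, which shows that $s_2\leqslant s_1$ is necessary for \emph{every} choice of $q_1,q_2$. Your proposal is therefore internally inconsistent at this point, and the inconsistency cannot be repaired within the lemma as printed: the stated condition is genuinely false in the region $1/q_2<1/q_1$, $s_1<s_2<s_1+n(1-\alpha)(1/q_1-1/q_2)$ (nonempty for $\alpha<1$), where the first clause of the disjunction is satisfied but the embedding fails.

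The statement the authors actually use later (in Sections 4 and 5 they deduce $s_2\leqslant s_1$ from the embedding whenever $1/q_2\leqslant 1/q_1$, not merely the strict line condition) is the standard one: the embedding holds if and only if either $1/q_2\leqslant 1/q_1$ and $s_2\leqslant s_1$, or $1/q_2>1/q_1$ and $(1-\alpha)/q_2+s_2/n<(1-\alpha)/q_1+s_1/n$. For $\alpha=1$ this coincides with the printed condition, which is why the discrepancy is easy to miss. All of your individual computations --- the change of variables $b_k=a_k\langle k\rangle^{s_1/(1-\alpha)}$, the inclusion $\ell^{q_1}\subset\ell^{q_2}$ when $\sigma\leqslant 0$, H\"older with $1/r=1/q_2-1/q_1$ and the criterion $\sigma r<-n$ when $q_2<q_1$, and the delta, dyadic-annulus, and truncated-power tests for necessity --- are correct and assemble into a complete proof of that corrected statement. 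The only defect is the false algebraic implication used to force the $q_2\geqslant q_1$ case under the lemma's printed hypothesis; once the case split is keyed to the sign of $1/q_2-1/q_1$ rather than to the printed disjunction, the argument closes.
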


To define the Besov spaces and Triebel-Lizorkin spaces, we introduce  the dyadic decomposition of $\mathbb{R}^n$.
Let $\varphi$ be a smooth bump function supported in the ball $\{\xi: |\xi|<\frac{3}{2}\}$ and be equal to 1 on the ball $\{\xi: |\xi|\leqslant \frac{4}{3}\}$.
Denote
\begin{equation}\label{decompositon function of Besov space}
\phi(\xi)=\varphi(\xi)-\varphi(2\xi),
\end{equation}
and a function sequence
\begin{equation}
\begin{cases}
\phi_j(\xi)=\phi(2^{-j}\xi),~j\in \mathbb{Z}^{+},
\\
\phi_0(\xi)=1-\sum\limits_{j\in \mathbb{Z}^+}\phi_j(\xi)=\varphi(\xi).
\end{cases}
\end{equation}
For integers $j\in \mathbb{N}_0$, we define the Littlewood-Paley operators
\begin{equation}
\Delta_j=\mathscr{F}^{-1}\phi_j\mathscr{F}.
\end{equation}
Let $0<p,q\leqslant \infty$, $s\in \mathbb{R}$. For a tempered distribution  $\ f$,  we set the norm
\begin{equation}
\|f\|_{B_{p,q}^s}=\left(\sum_{j\in\mathbb{N}_0}2^{jsq}\|\Delta_jf\|_{L^p}^q \right)^{1/q},~
\end{equation}
with the usual modifications when $q=\infty$.
The (inhomogeneous) Besov  space ${B_{p,q}^s}$ is the space of all tempered distributions $f$ for which the quantity $\|f\|_{B_{p,q}^s}$ is finite.
Let $0<p<\infty$, $0<q\leqslant \infty$, $s\in \mathbb{R}$. For a tempered distribution  $\ f$,  we set the norm
\begin{equation}
\|f\|_{F_{p,q}^s}=\left\|\left(\sum_{j\in\mathbb{N}_0}2^{jsq}|\Delta_jf|^q \right)^{1/q}\right\|_{L^p}
\end{equation}
with the usual modifications when $q=\infty$.
The Triebel-Lizorkin space ${F_{p,q}^s}$ is the space of all tempered distributions $f$ for which the quantity $\|f\|_{F_{p,q}^s}$ is finite.

%
%
Now, we turn to introduce the local Hardy space of Goldberg \cite{Goldberg_hp-Duke-1979}.
Let $\psi\in\mathscr{S}$ satisfy $\int_{\mathbb{R}^n}\psi(x)dx\neq0$.
Define $\psi_t=t^{-n}\psi(x/t)$.
The \emph{local Hardy spaces} is defined by
\begin{equation*}
  h^p:=\{f\in\mathscr{S}': \|f\|_{h^p}=\|\sup_{0<t<1}|\psi_t\ast f|\|_{L^p}<\infty\}.
\end{equation*}
We note that the definition of the local Hardy spaces is independent of the choice of $\psi\in \mathscr{S}$.
A function $a$ is said to be an \textit{$h^p$-atom of type $I$} (small $h^p$-atom)
if it satisfies the following \textit{support condition}, \textit{size condition} and \textit{vanishing moment condition}:
  \begin{equation}
    \begin{split}
    \text{supp~}a\subset Q \text{ with } |Q|<1, \|a\|_{L^{\infty}}\leqslant |Q|^{-1/p},\\
    \int{x^{\beta}a(x)dx}=0 \text{ for all } |\beta|\leqslant [n(1/p-1)],
    \end{split}
  \end{equation}
  where $Q$ is a cube and $|Q|$ is the Lebesgue measure of $Q$, and $[n(1/p-1)]$ is the integer part of $n(1/p-1)$.
A function $a$ is said to be an \textit{$h^p$-atom of type $I\!I$} (big $h^p$-atom) if it satisfies
  \begin{equation}
    \text{supp~}a\subset Q \text{ with } |Q|\geqslant1, \|a\|_{L^{\infty}}\leqslant |Q|^{-1/p}.
  \end{equation}
All the big and small $h_p$-atoms are collectively called $h_p$-atom.
We recall $\|a\|_{h_p}\lesssim 1$ for all $h_p$-atoms. On the other hand, any $f\in h_p (p\leqslant 1)$ can be represented by
\begin{equation}
  f=\sum_{j=1}^{\infty}\lambda_ja_j,
\end{equation}
where the series converges in the sense of distribution, $\{a_j\}$ is a collection of $h_p$-atoms and $\{\lambda_j\}$ is a sequence of complex numbers such that $\sum\limits_{j=1}^{\infty}|\lambda_j|^p<\infty$.
Moreover, we have
\begin{equation}
  \|f\|_{h^p}\sim\inf\left( \sum_{j=1}^{\infty}|\lambda_j|^p \right)^{1/p},
\end{equation}
where the infimum is taken over all representations $f=\sum\limits_{j=1}^{\infty}\lambda_ja_j$(see Lemma 5 in \cite{Goldberg_hp-Duke-1979}).

We recall that the local Hardy space $h^p$ is equivalent with the inhomogeneous Triebel-Lizorkin space $F_{p,2}^0$ for $p\in(0,\infty)$ (see 1.4 in \cite{Tribel_92}).
For the sake of convenience, we usually use $F_{p,2}^0$ as the norm of local Hardy space throughout the remainder of this article.

\begin{lemma}[Young's inequality]\label{Young inequality}~
 \begin{enumerate}
   \item Let $0<p\leqslant 1$, $R>0$, $\mathrm{supp}{\hat{f}},~\mathrm{supp}{\hat{g}}\subseteq B(x,R)\subseteq \mathbb{R}^n$.
   We have
   \begin{equation}
   \|f*g\|_{L^p} \leqslant C R^{n(\frac{1}{p}-1)} \|f\|_{L^p} \|g\|_{L^p},
   \end{equation}
   for all $f,g\in \mathscr{S}(\mathbb{R}^n)$ and $R>0$, where $C$ is independent of $x$, $x\in \mathbb{R}^n$.
   \item Let $1\leqslant p, q, r\leqslant \infty$ satisfy $1+{1}/{q}={1}/{p}+{1}/{r}$. Then we have
   \begin{equation}
   \|f*g\|_{L^q} \lesssim \|f\|_{L^p} \|g\|_{L^r}.
   \end{equation}
 \end{enumerate}

\end{lemma}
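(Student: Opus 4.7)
The statement bundles two separate inequalities, which I would prove independently.

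Part (2) is the classical Young convolution inequality. My plan is the standard textbook proof: bound $|(f*g)(y)| \leq \int |f(y-z)||g(z)|\,dz$, split the integrand into three factors, apply a three-factor H\"older inequality with exponents dictated by $1+1/q = 1/p+1/r$, then take $L^q$-norm in $y$ using Fubini. This is routine and well documented, so I would not dwell on it.

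Part (1), the Nikolskii--Young estimate for bandlimited Schwartz functions, requires more care. I plan three reductions. First, I would reduce to the case $x=0$ by a frequency modulation: writing $f(y) = e^{2\pi i x\cdot y}f_0(y)$ and $g(y) = e^{2\pi i x\cdot y}g_0(y)$ with $\hat{f_0}, \hat{g_0}$ supported in $B(0,R)$, a direct computation gives $(f*g)(y) = e^{2\pi i x\cdot y}(f_0*g_0)(y)$, so $|f*g|=|f_0*g_0|$ and all three $L^p$ (quasi-)norms are unchanged. This explains why the constant $C$ is independent of $x$. Second, I would reduce to $R=1$ by dilation: setting $\tilde f(y) = f_0(y/R)$ and similarly for $\tilde g$, the Fourier supports become $B(0,1)$, the $L^p$ norms scale as $\|\tilde f\|_{L^p} = R^{n/p}\|f_0\|_{L^p}$, and $(\tilde f*\tilde g)(y) = R^n(f_0*g_0)(y/R)$, so $\|\tilde f*\tilde g\|_{L^p} = R^{n+n/p}\|f_0*g_0\|_{L^p}$. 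Assuming the unit-scale bound, one then recovers exactly the factor $R^{n(1/p-1)}$ claimed in the theorem.

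Third, and the main obstacle, I need the base case $R=1$, $0<p\leq 1$: namely $\|f*g\|_{L^p} \lesssim \|f\|_{L^p}\|g\|_{L^p}$ whenever $\hat f, \hat g$ are supported in $B(0,1)$. The difficulty is that classical Young is unavailable in the subunital range. My plan is to argue via Shannon--Plancherel--Polya sampling. Pick $\Psi\in\mathscr{S}$ with $\widehat\Psi\equiv 1$ on $B(0,1)$ and compactly supported in a slightly larger ball. For $\delta>0$ small enough, the sampling formula
\[
g(z) = \sum_{k\in\delta\mathbb{Z}^n} g(k)\,\Psi(z-k)
\]
holds for $\hat g$ supported in $B(0,1)$. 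Substituting into $f*g$ and using $f*\Psi = f$ (because $\widehat\Psi=1$ on $\mathrm{supp}\,\hat f$) produces $(f*g)(y) = \sum_{k\in\delta\mathbb{Z}^n}g(k)\,f(y-k)$. The $p$-subadditivity $|\sum_k a_k|^p \leq \sum_k|a_k|^p$ for $0<p\leq 1$, applied pointwise and integrated in $y$, gives $\|f*g\|_{L^p}^p \leq \bigl(\sum_k|g(k)|^p\bigr)\|f\|_{L^p}^p$ by translation invariance. The classical sampling inequality $\sum_k|g(k)|^p \lesssim \|g\|_{L^p}^p$ for bandlimited $g$ then closes the estimate. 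The main technical work is concentrated in this third step and in verifying the sampling inequality in the quasi-Banach range $p\leq 1$; the rest is bookkeeping.
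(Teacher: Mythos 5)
The paper never proves this lemma: it is stated as a known result and used as a black box (part (2) is classical Young, part (1) is the standard convolution inequality for band-limited functions in the quasi-Banach range, found e.g.\ in Triebel's \emph{Theory of Function Spaces}), so there is no in-paper argument to compare against; your proposal should be judged on its own, and on its own it is correct and essentially coincides with the standard literature proof via Plancherel--P\'olya theory. Your two reductions are computed correctly: the modulation $f=e^{2\pi i x\cdot y}f_0$ indeed gives $|f*g|=|f_0*g_0|$ (explaining the $x$-independence of $C$), and the dilation bookkeeping $\|\tilde f*\tilde g\|_{L^p}=R^{n+n/p}\|f_0*g_0\|_{L^p}$ against $\|\tilde f\|_{L^p}\|\tilde g\|_{L^p}=R^{2n/p}\|f_0\|_{L^p}\|g_0\|_{L^p}$ recovers exactly $R^{n(1/p-1)}$.

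Two points in your base case need repair or citation, though neither is fatal. First, your sampling formula is missing the Poisson-summation normalization: with $\widehat\Psi\equiv 1$ on $B(0,1)$, $\mathrm{supp}\,\widehat\Psi\subseteq B(0,1+\epsilon)$, and $\delta<(2+\epsilon)^{-1}$ chosen so that the $\delta^{-1}\mathbb{Z}^n$-translates of $\mathrm{supp}\,\hat g$ miss $\mathrm{supp}\,\widehat\Psi$, the correct identity is
\begin{equation*}
g(z)=\delta^{n}\sum_{k\in\delta\mathbb{Z}^n}g(k)\,\Psi(z-k),
\end{equation*}
obtained by writing $\hat g=G\,\widehat\Psi$ with $G$ the $\delta^{-1}\mathbb{Z}^n$-periodization of $\hat g$; since $\delta$ is a fixed dimensional constant, this only changes $C$, but as written your identity is false by the factor $\delta^n$. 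Second, the closing estimate $\sum_{k\in\delta\mathbb{Z}^n}|g(k)|^p\lesssim\|g\|_{L^p}^p$ for $0<p\leqslant 1$ is precisely the Plancherel--P\'olya inequality in the quasi-Banach range, which is the one genuinely nontrivial input of the whole argument; you acknowledge it needs verification but give no proof sketch, so you should either cite it (Triebel, \emph{Theory of Function Spaces}, Section 1.3.3, or Plancherel--P\'olya) or derive it from the Peetre maximal function estimate for band-limited functions. With the normalization corrected and that inequality supplied or cited, your proof is complete, and the interchange of sum and integral is harmless for Schwartz $f,g$ by dominated convergence together with the $p$-subadditivity applied via Fatou.
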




The following proposition shows that the local Hardy space $h^p$ is equivalent with $L^p$ in the local meaning.
\begin{proposition}\label{equivlent of Lp and hp with Fourier support}
  Let $0<p<\infty$, $\alpha \in [0,1)$. Suppose $\{\varphi_k\}_{k\in \mathbb{Z}^n}$ is a sequence of smooth functions such that
  \begin{equation*}
    \text{supp}\varphi_k\subset B(\langle k\rangle^{\frac{\alpha}{1-\alpha}}k, C\langle k\rangle^{\frac{\alpha}{1-\alpha}})
  \end{equation*}
   where $C$ is a fixed positive constant. Then we have
  $$\|\varphi_k\|_{h^p}\sim \|\varphi_k\|_{L^p}$$
for all $k\in \mathbb{Z}^n$.
\end{proposition}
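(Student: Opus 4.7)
The plan is to prove the two inequalities $\|\varphi_k\|_{L^p}\lesssim\|\varphi_k\|_{h^p}$ and $\|\varphi_k\|_{h^p}\lesssim\|\varphi_k\|_{L^p}$ separately, by quite different methods. Throughout I will invoke the equivalence $h^p\sim F^0_{p,2}$ already recorded after the definition of $F_{p,q}^s$. For the easy direction, I will use a pointwise approximation argument: since the $h^p$-norm is independent of the choice of $\psi$ up to equivalence, I may normalize $\int \psi=1$. Smoothness of $\varphi_k$ yields $\psi_t*\varphi_k(x)\to\varphi_k(x)$ pointwise as $t\to 0^+$, while trivially $|\psi_t*\varphi_k(x)|\leq \sup_{0<s<1}|\psi_s*\varphi_k(x)|$ for every $t\in(0,1)$. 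Taking $L^p$-norms (or applying Fatou) gives $\|\varphi_k\|_{L^p}\leq\|\varphi_k\|_{h^p}$.

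For the reverse inequality $\|\varphi_k\|_{h^p}\lesssim\|\varphi_k\|_{L^p}$, the key observation is that the compact Fourier support of $\varphi_k$ forces only a bounded number of Littlewood--Paley pieces $\Delta_j\varphi_k$ to be nonzero, uniformly in $k$. Indeed, $\mathrm{supp}\,\hat{\varphi}_k\subset B(\langle k\rangle^{\alpha/(1-\alpha)}k,C\langle k\rangle^{\alpha/(1-\alpha)})$ sits in a region where $|\xi|\sim\max(1,\langle k\rangle^{1/(1-\alpha)})$, while $\mathrm{supp}\,\phi_j$ is a dyadic annulus of radius $\sim 2^j$. Matching these two regions shows that the set $J_k:=\{j\in\mathbb{N}_0:\Delta_j\varphi_k\not\equiv 0\}$ consists of indices with $2^j\sim\max(1,\langle k\rangle^{1/(1-\alpha)})$, and its cardinality is bounded by some constant $N$ independent of $k$. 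Using the pointwise inequality $\ell^2\leq\ell^{p\wedge 1}$ on the finite set $J_k$ together with the quasi-triangle inequality in $L^p$, I obtain
\[
\|\varphi_k\|_{h^p}\sim\|\varphi_k\|_{F^0_{p,2}}\lesssim\sum_{j\in J_k}\|\Delta_j\varphi_k\|_{L^p}.
\]
For each $j\in J_k$, write $\Delta_j\varphi_k=K_j*\varphi_k$ with $K_j:=\mathscr{F}^{-1}\phi_j$. Since $\mathrm{supp}\,\hat{K}_j$ and $\mathrm{supp}\,\hat{\varphi}_k$ both lie in a common ball of radius $\sim 2^j$, Lemma \ref{Young inequality}(i) gives (for $p\leq 1$) $\|\Delta_j\varphi_k\|_{L^p}\lesssim 2^{jn(1/p-1)}\|K_j\|_{L^p}\|\varphi_k\|_{L^p}$. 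A dilation computation yields $\|K_j\|_{L^p}=2^{jn(1-1/p)}\|\mathscr{F}^{-1}\phi\|_{L^p}$, so the scaling factors cancel and $\|\Delta_j\varphi_k\|_{L^p}\lesssim\|\varphi_k\|_{L^p}$ uniformly in $j$ and $k$; for $p>1$ the bound follows either from the classical $h^p\sim L^p$ identity or from Lemma \ref{Young inequality}(ii) using $\|K_j\|_{L^1}=\|\mathscr{F}^{-1}\phi\|_{L^1}$.

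The main obstacle is the uniform bound $|J_k|\leq N$ across all regimes. This requires separating the small-$|k|$ case (where $\mathrm{supp}\,\hat{\varphi}_k$ lies in a bounded ball near the origin and only the first few $\Delta_j$'s can act) from the large-$|k|$ case (where the frequency patch is distant from the origin, so its intersection with the dyadic annuli forces $2^j\sim\langle k\rangle^{1/(1-\alpha)}$ up to a constant factor determined by $C$ and $\alpha$). Once this cardinality bound is secured, the rest is essentially the scaling-invariant bookkeeping of Young's inequality, producing constants that depend on $n,p,\alpha,C$ but not on $k$.
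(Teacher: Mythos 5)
Your proof is correct and follows essentially the same route as the paper: the trivial direction from the definition of $h^p$, and the reverse direction via $h^p\sim F^0_{p,2}$, the uniformly bounded number of nonvanishing Littlewood--Paley pieces $\Delta_j\varphi_k$, and Lemma \ref{Young inequality}. Your write-up merely makes explicit two points the paper leaves implicit, namely the uniformity in $k$ of the cardinality bound on $\{j:\Delta_j\varphi_k\not\equiv 0\}$ and the scaling cancellation $2^{jn(1/p-1)}\|\mathscr{F}^{-1}\phi_j\|_{L^p}\sim 1$.
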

\begin{proof}
  By the definition of local Hardy space, we have $\|\varphi_k\|_{h^p}\gtrsim \|\varphi_k\|_{L^p}$.
  For the opposite direction, we use the Littlewood-Paley characterization of the local Hardy space.
  By Lemma \ref{Young inequality}, we have
  \begin{equation}
    \begin{split}
      \|\varphi_k\|_{h^p}
      =\|\varphi_k\|_{F^{0}_{p,2}}
      =\left\|\left(\sum_{j\in \mathbb{Z}^n}|\Delta_j\varphi_k|^2\right)^{\frac{1}{2}}\right\|_{L^{p}}
      \lesssim \sum_{j\in \mathbb{Z}^n}\left\|\Delta_j\varphi_k\right\|_{L^{p}}
      \lesssim \|\varphi_k\|_{L^p},
    \end{split}
  \end{equation}
  where we use the fact that there are only finite $j$ such that $\Delta_j\varphi_k$ is not zero.
\end{proof}


\section{Estimates of $h^p$ atoms on $\alpha$-modulation spaces}
In order to make the proof of main theorems more clear, we give some important estimates associated with $h^p$-atoms.
\begin{proposition}[Pointwise estimate for $h^p$ atoms]\label{Proposition, pointwise estiamte for atoms}
  Suppose $a$ is a function supported in a cube $Q$ centered at the origin, satisfying $\|a\|_{L^{\infty}}\leqslant |Q|^{-1/p}$. We let $Q^{*}$ be the
cube with side length $2\sqrt{n}l(Q)$ having the same center as $Q$, where the $l(Q)$ is the
side length of $Q$. Take $\rho$ to be a smooth function with compact support near the origin. Denote
$\rho_k^{\alpha}(\xi)=\rho\left(\frac{\xi-\langle k\rangle^{\frac{\alpha}{1-\alpha}}k}{\langle k\rangle^{\frac{\alpha}{1-\alpha}}}\right)$ and $\widetilde{\Box}_k^{\alpha}=\rho_k^{\alpha}(D)$.
Then we have
  \begin{equation*}
      |\widetilde{\Box}_k^{\alpha}a(x)|\lesssim \langle k \rangle^{\frac{n\alpha}{1-\alpha}}\langle\langle k \rangle^{\frac{n\alpha}{1-\alpha}}x\rangle^{-\mathscr{L}}|Q|^{1-1/p}
                                                                                                     \text{ for } x\in (Q^*)^c,
  \end{equation*}
  where $\mathscr{L}$ is a fixed number which could be arbitrary large.
  If the function $a$ also satisfies the vanishing moment condition as follow
  \begin{equation*}
    \int_{\mathbb{R}^n}{x^{\beta}a(x)dx}=0 \text{ for all } |\beta|\leqslant N:= [n(1/p-1)],
  \end{equation*}
  we also have the following estimate
  \begin{equation*}
      |\widetilde{\Box}_k^{\alpha}a(x)|\lesssim \langle k \rangle^{\frac{n\alpha}{1-\alpha}} (\sqrt[n]{|Q|}\langle k \rangle^{\frac{1}{1-\alpha}})^{N+1}
                                    \langle \langle k \rangle^{\frac{\alpha}{1-\alpha}} x\rangle^{-\mathscr{L}}|Q|^{1-1/p},
                                                                                                     \text{ for } x\in (Q^*)^c,
  \end{equation*}
  where $\mathscr{L}$ is a fixed number which could be arbitrary large.
\end{proposition}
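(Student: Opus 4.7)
The plan is to write $\widetilde{\Box}_k^\alpha a = K_k^\alpha * a$ with $K_k^\alpha := \mathscr{F}^{-1}\rho_k^\alpha$, and then estimate the kernel explicitly. Setting $\mu := \langle k\rangle^{\alpha/(1-\alpha)}$ and $K := \mathscr{F}^{-1}\rho$, a direct change of variable gives
\begin{equation*}
K_k^\alpha(x) = \mu^n\, e^{2\pi i \mu k\cdot x}\, K(\mu x).
\end{equation*}
Because $\rho \in \mathscr{S}$ (compactly supported smooth), $K$ is Schwartz, so $|K(z)| \lesssim \langle z\rangle^{-\mathscr{L}}$ for any $\mathscr{L}$; hence $|K_k^\alpha(x)| \lesssim \mu^n \langle \mu x\rangle^{-\mathscr{L}}$. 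A simple geometric check shows that for $x \in (Q^*)^c$ and $y \in Q$ (centered at the origin with side length $l(Q)$), one has $|x-y| \gtrsim |x|$, so $\langle \mu(x-y)\rangle \gtrsim \langle \mu x\rangle$.

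For the first estimate (without vanishing moments), I would simply plug this pointwise kernel bound into the convolution and use $|a(y)|\leq \|a\|_{L^\infty}\leq |Q|^{-1/p}$:
\begin{equation*}
|\widetilde{\Box}_k^\alpha a(x)| \leq \int_Q |K_k^\alpha(x-y)|\,|a(y)|\,dy \lesssim \mu^n \langle \mu x\rangle^{-\mathscr{L}} \|a\|_{L^\infty} |Q| \lesssim \mu^n \langle \mu x\rangle^{-\mathscr{L}} |Q|^{1-1/p},
\end{equation*}
matching the claimed bound (up to identifying $\mu^n = \langle k\rangle^{n\alpha/(1-\alpha)}$).

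For the second estimate I would exploit the vanishing moments through a Taylor expansion of $K_k^\alpha(x-\cdot)$ around $0$ of order $N$: writing $K_k^\alpha(x-y)$ as the Taylor polynomial plus an integral remainder of order $N+1$, the cancellation $\int y^\beta a(y)\,dy = 0$ for $|\beta|\leq N$ kills the polynomial part, leaving
\begin{equation*}
\widetilde{\Box}_k^\alpha a(x) = \sum_{|\beta|=N+1}\frac{(-1)^{N+1}(N+1)}{\beta!}\int_Q y^\beta a(y)\int_0^1 (1-t)^N (\partial^\beta K_k^\alpha)(x-ty)\,dt\,dy.
\end{equation*}
Each derivative of $K_k^\alpha$ brings down either $\mu k$ from the phase or $\mu$ from the profile, so $|\partial^\beta K_k^\alpha(z)| \lesssim (\mu\langle k\rangle)^{|\beta|}\mu^n \langle \mu z\rangle^{-\mathscr{L}}$; since $\mu\langle k\rangle = \langle k\rangle^{1/(1-\alpha)}$, the factor $\langle k\rangle^{(N+1)/(1-\alpha)}$ emerges. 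Using $|y|\lesssim l(Q) = |Q|^{1/n}$ and again $|x-ty|\gtrsim |x|$, combined with $\|a\|_{L^\infty}|Q|\leq |Q|^{1-1/p}$, produces exactly the second bound.

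The routine parts are the Schwartz decay of $K$ and the geometric estimate $|x-y|\gtrsim |x|$; the main point to be careful about is bookkeeping in the Taylor remainder, namely tracking that differentiating through the oscillatory phase $e^{2\pi i\mu k\cdot x}$ yields the factor $\langle k\rangle^{1/(1-\alpha)}$ per derivative (not merely $\mu$), which is what produces the correct exponent $(N+1)/(1-\alpha)$ in the final bound. Once that accounting is in place, the proof is essentially a standard atom-times-smooth-cutoff argument adapted to the $\alpha$-scaling.
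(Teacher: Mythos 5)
Your proposal follows essentially the same route as the paper: the first bound comes from the Schwartz decay of the rescaled kernel together with $\|a\|_{L^1}\leqslant\|a\|_{L^\infty}|Q|\leqslant|Q|^{1-1/p}$, and the second from a Taylor expansion with integral remainder of order $N+1$, using the vanishing moments and the fact that each derivative of $(\rho_k^\alpha)^\vee$ costs at most $\langle k\rangle^{1/(1-\alpha)}$ because of the oscillatory phase. Both steps, including the geometric observation $|x-ty|\gtrsim|x|$ for $x\in(Q^*)^c$, $y\in Q$, match the paper's argument; your derivation also confirms that the decay factor in the first displayed estimate should read $\langle\langle k\rangle^{\frac{\alpha}{1-\alpha}}x\rangle^{-\mathscr{L}}$, as in the paper's own computation.
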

\begin{proof}
 Using the assumption, we obtain $\|a\|_{L^1}\leqslant \|a\|_{L^{\infty}}\cdot |Q| \leqslant|Q|^{1-1/p}$.
 By the rapid decay of $\mathscr{F}^{-1}\rho$, we deduce that
  \begin{equation}
    \begin{split}
     |\widetilde{\Box}_k^{\alpha}a(x)|&=\left| \int_{\mathbb{R}^n}(\rho_k^{\alpha})^{\vee}(x-y)a(y)dy \right|
                          =\left| \int_{Q}(\rho_k^{\alpha})^{\vee}(x-y)a(y)dy \right|
                          \\
                          &\leqslant
                          \int_{Q}|\langle k \rangle^{\frac{n\alpha}{1-\alpha}}\rho^{\vee}(\langle k \rangle^{\frac{\alpha}{1-\alpha}}(x-y))a(y)|dy
                          \\
                          &\lesssim
                          \langle k \rangle^{\frac{n\alpha}{1-\alpha}}\int_{Q}|\langle\langle k \rangle^{\frac{\alpha}{1-\alpha}}(x-y)\rangle^{-\mathscr{L}}a(y)|dy
                          \\
                          &\lesssim
                          \langle k \rangle^{\frac{n\alpha}{1-\alpha}}\langle\langle k \rangle^{\frac{\alpha}{1-\alpha}}x\rangle^{-\mathscr{L}}\|a\|_{L^1}
                          \\
                          &\lesssim
                          \langle k \rangle^{\frac{n\alpha}{1-\alpha}}\langle\langle k \rangle^{\frac{\alpha}{1-\alpha}}x\rangle^{-\mathscr{L}}|Q|^{1-1/p}.
    \end{split}
  \end{equation}
  In addition, if $a$ satisfies the vanishing moment condition, we use the Taylor formula to deduce that
 \begin{equation}\label{for proof, 4}
    \begin{split}
     \widetilde{\Box}_k^{\alpha}a(x)&=\int_{\mathbb{R}^n}(\rho_k^{\alpha})^{\vee}(x-y)a(y)dy
                          \\
                          &=\int_{\mathbb{R}^n}\bigg((\rho_k^{\alpha})^{\vee}(x-y)-\sum_{|\gamma|\leqslant N}\frac{\partial^{\gamma}(\rho_k^{\alpha})^{\vee}(x)}{|\gamma|!}(-y)^{\gamma}\bigg)a(y)dy
                          \\
                          &=\int_{\mathbb{R}^n}\sum_{|\gamma|=N+1}\int_0^1\frac{(-y)^{\gamma}}{N!}(1-t)^N \partial^{\gamma}(\rho_k^{\alpha})^{\vee}(x-ty)a(y)dtdy.
    \end{split}
  \end{equation}
  Noticing that $(\rho_k^{\alpha})^{\vee}(x)=\langle k\rangle^{\frac{n\alpha}{1-\alpha}}\rho^{\vee}(\langle k\rangle^{\frac{\alpha}{1-\alpha}}x)e^{2\pi i \langle k\rangle^{\frac{\alpha}{1-\alpha}} k\cdot x}$,
  we obtain that
  \begin{equation}\label{for proof, 5}
    \begin{split}
      |(\partial^{\gamma}(\rho_k^{\alpha})^{\vee})(x-ty)|
                          \lesssim &
                          \langle k\rangle^{\frac{n\alpha}{1-\alpha}}
                          \sum_{\gamma_1+\gamma_2=\gamma}\langle k\rangle^{\frac{\alpha |\gamma_1|}{1-\alpha}}\langle k\rangle^{\frac{ |\gamma_2|}{1-\alpha}}
                          \langle \langle k\rangle^{\frac{\alpha}{1-\alpha}}(x-ty)\rangle^{-\mathscr{L}}
                          \\
                          \lesssim &
                          \langle k\rangle^{\frac{n\alpha}{1-\alpha}}\langle k\rangle^{\frac{ |\gamma|}{1-\alpha}}\langle \langle k\rangle^{\frac{\alpha}{1-\alpha}}x\rangle^{-\mathscr{L}}
    \end{split}
  \end{equation}
  for $x\in (Q^*)^c$, $y\in Q$, $t\in [0,1]$.
  Combining with (\ref{for proof, 4}) and (\ref{for proof, 5}), we deduce that
  \begin{equation*}
    \begin{split}
     |\widetilde{\Box}_k^{\alpha}a(x)|
     \lesssim &
     \int_{Q}\sum_{|\gamma|=N+1}\int_0^1\frac{|y|^{N+1}}{N!}(1-t)^N|(\partial^{\gamma}(\rho_k^{\alpha})^{\vee})(x-ty)|\cdot|a(y)|dtdy
     \\
     \lesssim &
     \langle k\rangle^{\frac{n\alpha}{1-\alpha}}\langle k\rangle^{\frac{N+1}{1-\alpha}}\langle \langle k\rangle^{\frac{\alpha}{1-\alpha}}x\rangle^{-\mathscr{L}}
     \int_{Q}|y|^{N+1}|a(y)|dy
     \\
     \lesssim &
     \langle k\rangle^{\frac{n\alpha}{1-\alpha}}\langle k\rangle^{\frac{N+1}{1-\alpha}}\langle \langle k\rangle^{\frac{\alpha}{1-\alpha}}x\rangle^{-\mathscr{L}} |Q|^{\frac{N+1}{n}}|Q|^{1-1/p}
     \\
     = &
     \langle k \rangle^{\frac{n\alpha}{1-\alpha}} (\sqrt[n]{|Q|}\langle k \rangle^{\frac{1}{1-\alpha}})^{N+1}
                                    \langle \langle k \rangle^{\frac{\alpha}{1-\alpha}} x\rangle^{-\mathscr{L}}|Q|^{1-1/p}
    \end{split}
  \end{equation*}
  for $x\in (Q^*)^c$.
\end{proof}
\begin{proposition}\label{proposition, p-infty local Hardy to mod.}
  Let $0<p\leqslant 1$, we have
  \begin{equation*}
    \|a\|_{M_{p,\infty}^{n(1-\alpha)(1-1/p),\alpha}}\lesssim 1
  \end{equation*}
  for all $h^p$-atoms.
\end{proposition}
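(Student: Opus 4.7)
The norm in question unpacks to $\sup_{k\in\mathbb{Z}^n}\langle k\rangle^{n(1-1/p)}\|\Box_k^\alpha a\|_{L^p}$, so the goal is to prove $\|\Box_k^\alpha a\|_{L^p}\lesssim \langle k\rangle^{n(1/p-1)}$ uniformly in $k$. First, I would note that since $\widehat{\tau_y a}(\xi)=e^{-2\pi i y\cdot\xi}\hat a(\xi)$, we have $\Box_k^\alpha(\tau_y a)(x)=(\Box_k^\alpha a)(x-y)$; hence translating, I may assume the supporting cube $Q$ is centered at the origin. Also, by the scaling of $\eta_k^\alpha$ around its own center together with the derivative bounds in \eqref{decompositon function of alpha modulation spaces}, one has $\|(\eta_k^\alpha)^\vee\|_{L^1}\lesssim 1$ uniformly in $k$.

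The proof plan is then to split $\|\Box_k^\alpha a\|_{L^p}^p=\int_{Q^*}+\int_{(Q^*)^c}$ and treat each part separately. On $Q^*$, Young's inequality gives $\|\Box_k^\alpha a\|_{L^\infty}\leqslant\|(\eta_k^\alpha)^\vee\|_{L^1}\|a\|_{L^\infty}\lesssim|Q|^{-1/p}$, so
\[
\|\Box_k^\alpha a\|_{L^p(Q^*)}\lesssim|Q^*|^{1/p}|Q|^{-1/p}\lesssim 1\leqslant \langle k\rangle^{n(1/p-1)},
\]
since $p\leqslant 1$. This part is uniform in the type of atom.

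For the far part on $(Q^*)^c$, I would handle big atoms ($|Q|\geqslant 1$) and small atoms ($|Q|<1$) separately, applying Proposition \ref{Proposition, pointwise estiamte for atoms}. For big atoms I use only the first (no vanishing moments) pointwise bound; integrating and changing variables by $y=\langle k\rangle^{\alpha/(1-\alpha)}x$ yields $\|\Box_k^\alpha a\|_{L^p((Q^*)^c)}\lesssim\langle k\rangle^{n(1-1/p)\alpha/(1-\alpha)}|Q|^{1-1/p}$, and both factors are $\leqslant 1$ (the first because $1-1/p\leqslant 0$, the second because $|Q|\geqslant 1$), giving the desired bound. For small atoms, the honest use of the vanishing moment estimate alone overshoots in the $\langle k\rangle$-factor, so the key trick is to invoke \emph{both} bounds of Proposition \ref{Proposition, pointwise estiamte for atoms} and take the minimum. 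Setting $R:=|Q|^{1/n}\langle k\rangle^{1/(1-\alpha)}$ and repeating the integration, I get
\[
\|\Box_k^\alpha a\|_{L^p((Q^*)^c)}\lesssim\langle k\rangle^{n(1/p-1)}\,\min\bigl\{R^{-n(1/p-1)},\,R^{N+1-n(1/p-1)}\bigr\}.
\]
Here the main obstacle is extracting a bound by $1$ on the minimum; this is exactly where the choice $N=[n(1/p-1)]$ matters. For $R\leqslant 1$ the second entry is $\leqslant 1$ because $N+1>n(1/p-1)$, while for $R\geqslant 1$ the first entry is $\leqslant 1$ because $-n(1/p-1)\leqslant 0$. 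Combining the two regions then yields $\|\Box_k^\alpha a\|_{L^p}\lesssim\langle k\rangle^{n(1/p-1)}$, which is exactly the required estimate.
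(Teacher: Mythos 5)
Your argument is correct and follows essentially the same route as the paper: the same $Q^{*}$/$(Q^{*})^{c}$ splitting, the same pointwise atom estimates, and the same case analysis according to whether $R=|Q|^{1/n}\langle k\rangle^{1/(1-\alpha)}$ is $\leqslant 1$ or $\geqslant 1$ (the paper phrases your ``minimum of the two bounds'' as simply choosing which pointwise estimate to apply in each regime, noting that the small-$R$ regime is empty for big atoms). The only cosmetic difference is that you apply Proposition \ref{Proposition, pointwise estiamte for atoms} directly to $\Box_k^{\alpha}a$, whereas it is stated for $\widetilde{\Box}_k^{\alpha}=\rho_k^{\alpha}(D)$ with $\rho$ a fixed bump; this is harmless because the support and derivative conditions in \eqref{decompositon function of alpha modulation spaces} yield the same kernel bounds for $(\eta_k^{\alpha})^{\vee}$, but the paper instead first reduces to $\|\Box_k^{\alpha}a\|_{L^p}\lesssim\|\widetilde{\Box}_k^{\alpha}a\|_{L^p}$ via the bandlimited Young inequality of Lemma \ref{Young inequality}.
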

\begin{proof}
Take $a$ to be an $h^p$ atom, and without loss of generality we may assume that $a$ is supported in a cube $Q$ centered at the origin.
We denote $Q^{\ast}$ the cube with side length $2\sqrt{n}l(Q)$ having the same center as $Q$, where the $l(Q)$ is the side length of $Q$.
Choose a smooth function $\rho$ with compact support, such that $\rho_{k}^{\alpha}\cdot \eta_k^{\alpha}=\eta_k^{\alpha}$ and $\Box_k^{\alpha}\circ \widetilde{\Box}_k^{\alpha}=\Box_k^{\alpha}$, where we denote
$\rho_k^{\alpha}(\xi)=\rho\left(\frac{\xi-\langle k\rangle^{\frac{\alpha}{1-\alpha}}k}{\langle k\rangle^{\frac{\alpha}{1-\alpha}}}\right)$ and $\widetilde{\Box}_k^{\alpha}=\rho_k^{\alpha}(D)$.
Using Lemma \ref{Young inequality}, we obtain
\begin{equation}
  \|\Box_k^{\alpha}f\|_{L^p}=\|\widetilde{\Box}_k^{\alpha}(\Box_k^{\alpha}f)\|_{L^p}
  \lesssim
  \langle k\rangle^{\frac{n\alpha(1/(p\wedge 1)-1)}{1-\alpha}}\|\mathscr{F}^{-1}\eta_k^{\alpha}\|_{L^{p\wedge 1}}\cdot \|\widetilde{\Box}_k^{\alpha}f\|_{L^p}
  \lesssim
  \|\widetilde{\Box}_k^{\alpha}f\|_{L^p}
\end{equation}
for any fixed $p\in (0,\infty]$.
Thus, we only need to verify
\begin{equation*}
  \|a\|_{\widetilde{M}_{p,\infty}^{n(1-\alpha)(1-1/p),\alpha}}
  := \sup_{k\in\mathbb{Z}^n} \langle k \rangle^{n(1-1/p)}\|\widetilde{\Box}_k^{\alpha}a\|_{L^p}
  \lesssim 1.
\end{equation*}

By the (quasi-)triangle inequality, we have
\begin{equation}\label{for proof, 6}
  \begin{split}
\|a\|_{\widetilde{M}_{p,\infty}^{n(1-\alpha)(1-1/p),\alpha}}
=&\sup_{k\in\mathbb{Z}^n} \langle k \rangle^{n(1-1/p)}\|\widetilde{\Box}_k^{\alpha}a\|_{L^p}
    \\
    \lesssim &
    \sup_{k\in\mathbb{Z}^n} \langle k \rangle^{n(1-1/p)}\|\widetilde{\Box}_k^{\alpha}a\chi_{Q^{\ast}}\|_{L^p}+\sup_{k\in\mathbb{Z}^n} \langle k \rangle^{n(1-1/p)}\|\widetilde{\Box}_k^{\alpha}a\chi_{(Q^{\ast})^c}\|_{L^p}
  \end{split}
\end{equation}
By the properties of $h^p$-atom, we obtain
\begin{equation*}
  \|\widetilde{\Box}_k^{\alpha}a\chi_{Q^{\ast}}\|_{L^p}
  \lesssim |Q|^{1/p}\|\widetilde{\Box}_k^{\alpha}a\|_{L^{\infty}}
  \lesssim |Q|^{1/p}\|a\|_{L^{\infty}}\lesssim 1.
\end{equation*}
Recalling $p\leqslant 1$, we deduce $\sup\limits_{k\in\mathbb{Z}^n} \langle k \rangle^{n(1-1/p)}\|\widetilde{\Box}_k^{\alpha}a\chi_{Q^{\ast}}\|_{L^p}\lesssim 1$.

We turn to the estimates of second term in (\ref{for proof, 6}).
Denote $N= [n(1/p-1)]$.
Using Proposition \ref{Proposition, pointwise estiamte for atoms}, we obtain that
\begin{equation}
  \begin{split}
    &\sup_{\substack{k\in\mathbb{Z}^n\\ \sqrt[n]{|Q|}\langle k \rangle^{\frac{1}{1-\alpha}}<1}}\langle k \rangle^{n(1-1/p)}\|\widetilde{\Box}_k^{\alpha}a\chi_{(Q^{\ast})^c}\|_{L^p}
    \\
    \lesssim &
    \sup_{\substack{k\in\mathbb{Z}^n\\ \sqrt[n]{|Q|}\langle k \rangle^{\frac{1}{1-\alpha}}<1}}\langle k \rangle^{n(1-1/p)}\|\langle k \rangle^{\frac{n\alpha}{1-\alpha}} (\sqrt[n]{|Q|}\langle k \rangle^{\frac{1}{1-\alpha}})^{N+1}
                                    \langle \langle k \rangle^{\frac{\alpha}{1-\alpha}} x\rangle^{-\mathscr{L}}|Q|^{1-1/p}\chi_{(Q^{\ast})^c}\|_{L^p}
    \\
    \lesssim &
    (\sqrt[n]{|Q|}\langle k \rangle^{\frac{1}{1-\alpha}})^{n(1-1/p)+N+1}\leqslant 1,
  \end{split}
\end{equation}
where we use $N+1+n(1-1/p)\geqslant 0$ and the fact that there is no $k\in \mathbb{Z}^n$ such that $\sqrt[n]{|Q|}\langle k \rangle^{\frac{1}{1-\alpha}}<1$ for big $h_p$-atom.
We also have
\begin{equation*}
  \begin{split}
    &\sup_{\substack{k\in\mathbb{Z}^n\\ \sqrt[n]{|Q|}\langle k \rangle^{\frac{1}{1-\alpha}}\geqslant 1}}
                               \langle k \rangle^{n(1-1/p)}\|\widetilde{\Box}_k^{\alpha}a\chi_{(Q^{\ast})^c}\|_{L^p}
     \\
    \lesssim &
    \sup_{\substack{k\in\mathbb{Z}^n\\ \sqrt[n]{|Q|}\langle k \rangle^{\frac{1}{1-\alpha}}\geqslant 1}}
                               \langle k \rangle^{n(1-1/p)}\|\langle k \rangle^{\frac{n\alpha}{1-\alpha}}\langle\langle k \rangle^{\frac{n\alpha}{1-\alpha}}x\rangle^{-\mathscr{L}}|Q|^{1-1/p}\chi_{(Q^{\ast})^c}\|_{L^p}
    \\
    \lesssim &
    \sup_{\substack{k\in\mathbb{Z}^n\\ \sqrt[n]{|Q|}\langle k \rangle^{\frac{1}{1-\alpha}}\geqslant 1}}(\sqrt[n]{|Q|}\langle k \rangle^{\frac{1}{1-\alpha}})^{n(1-1/p)}\leqslant 1,
  \end{split}
\end{equation*}
where we use $n(1-1/p)\leqslant 0$. Thus, we deduce that
\begin{equation*}
  \begin{split}
    &\sup_{k\in\mathbb{Z}^n} \langle k \rangle^{n(1-1/p)}\|\widetilde{\Box}_k^{\alpha}a\chi_{(Q^{\ast})^c}\|_{L^p}
    \\
    \lesssim &
    \sup_{\substack{k\in\mathbb{Z}^n\\ \sqrt[n]{|Q|}\langle k \rangle^{\frac{1}{1-\alpha}}<1}}
                               \langle k \rangle^{n(1-1/p)}\|\widetilde{\Box}_k^{\alpha}a\chi_{(Q^{\ast})^c}\|_{L^p}
    + \sup_{\substack{k\in\mathbb{Z}^n\\ \sqrt[n]{|Q|}\langle k \rangle^{\frac{1}{1-\alpha}}\geqslant 1}}
                               \langle k \rangle^{n(1-1/p)}\|\widetilde{\Box}_k^{\alpha}a\chi_{(Q^{\ast})^c}\|_{L^p}
    \lesssim
     1.
  \end{split}
\end{equation*}
\end{proof}

\begin{proposition}\label{proposition, p-p local Hardy to mod.}
  Let $0<p< 1$, we have
  \begin{equation*}
    \|a\|_{M_{p,p}^{n(1-\alpha)(1-2/p),\alpha}}\lesssim 1
  \end{equation*}
  for all $h^p$-atoms.
\end{proposition}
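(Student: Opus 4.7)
The plan is to mimic the proof of Proposition 3.2, combining the atom pointwise estimates of Proposition 3.1 with a careful summation in $k$. Because $\frac{sp}{1-\alpha} = n(p-2)$ when $s = n(1-\alpha)(1-2/p)$, and the reduction $\|\Box_k^\alpha f\|_{L^p} \lesssim \|\widetilde\Box_k^\alpha f\|_{L^p}$ from Young's inequality is identical to the one used in Proposition 3.2, it suffices to establish
\[
\sum_{k\in\mathbb{Z}^n}\langle k\rangle^{n(p-2)}\|\widetilde\Box_k^\alpha a\chi_{Q^*}\|_{L^p}^p + \sum_{k\in\mathbb{Z}^n}\langle k\rangle^{n(p-2)}\|\widetilde\Box_k^\alpha a\chi_{(Q^*)^c}\|_{L^p}^p \lesssim 1,
\]
where I split via the $L^p$ quasi-triangle inequality for $p<1$.

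For the $Q^*$ piece, I would use $\|\widetilde\Box_k^\alpha a\|_{L^\infty}\lesssim \|a\|_{L^\infty}\leqslant |Q|^{-1/p}$ and $|Q^*|\sim|Q|$ to obtain $\|\widetilde\Box_k^\alpha a\chi_{Q^*}\|_{L^p}^p\lesssim 1$ uniformly in $k$; the weighted sum $\sum_k\langle k\rangle^{n(p-2)}$ converges precisely because $p<1$ forces $n(p-2)<-n$.

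For the $(Q^*)^c$ piece, following the dichotomy in Proposition 3.2, I would split the $k$-sum at the threshold $\sqrt[n]{|Q|}\langle k\rangle^{1/(1-\alpha)}=1$. In the large-frequency regime I would apply the first pointwise bound of Proposition 3.1 and integrate $\langle \langle k\rangle^{\alpha/(1-\alpha)}x\rangle^{-\mathscr{L}}$ in $L^p$ (with $\mathscr{L}$ taken so large that $\mathscr{L}p>n$), producing
\[
\|\widetilde\Box_k^\alpha a\chi_{(Q^*)^c}\|_{L^p}^p\lesssim \langle k\rangle^{n\alpha(p-1)/(1-\alpha)}|Q|^{p-1};
\]
multiplied by $\langle k\rangle^{n(p-2)}$ the total $\langle k\rangle$-exponent simplifies to $n(p-2+\alpha)/(1-\alpha)$, strictly less than $-n$ iff $p<1$, so summing the tail over $\langle k\rangle\gtrsim|Q|^{-(1-\alpha)/n}$ yields $|Q|^{1-p}$ and cancels the $|Q|^{p-1}$. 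In the small-frequency regime (which can only occur for small atoms, i.e.\ when the vanishing moment condition is available), I would use the second pointwise bound of Proposition 3.1 with $N=[n(1/p-1)]$ to get
\[
\|\widetilde\Box_k^\alpha a\chi_{(Q^*)^c}\|_{L^p}^p\lesssim |Q|^{p(N+1)/n+p-1}\langle k\rangle^{[n\alpha(p-1)+p(N+1)]/(1-\alpha)}.
\]
After multiplying by $\langle k\rangle^{n(p-2)}$, the aggregate exponent plus $n$ equals $[p(N+1)+n(p-1)]/(1-\alpha)$, which is strictly positive by the definition of $N$; summing up to $\langle k\rangle\lesssim|Q|^{-(1-\alpha)/n}$ therefore yields $|Q|^{-p(N+1)/n-p+1}$, again cancelling the $|Q|$-factor.

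The main obstacle is the bookkeeping of exponents rather than any analytic subtlety: the choice $s=n(1-\alpha)(1-2/p)$ is precisely the one making the $|Q|$-powers cancel simultaneously in both frequency regimes, and the strict inequalities $n(p-2)<-n$ and $p(N+1)+n(p-1)>0$ are both active, together accounting for the threshold $p<1$. Once these computations are verified, summing the three contributions proves $\|a\|_{M_{p,p}^{n(1-\alpha)(1-2/p),\alpha}}\lesssim 1$.
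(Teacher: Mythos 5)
Your proposal is correct and follows essentially the same route as the paper's proof: the same Young-inequality reduction to $\widetilde\Box_k^\alpha a$, the same $Q^*$/$(Q^*)^c$ split, the same dichotomy at $\sqrt[n]{|Q|}\langle k\rangle^{1/(1-\alpha)}=1$ using the two pointwise bounds of Proposition \ref{Proposition, pointwise estiamte for atoms}, and the same two active inequalities $n(p-2)<-n$ and $p(N+1)+n(p-1)>0$. Your exponent bookkeeping checks out and matches the paper's (which organizes the same sums in terms of powers of $\sqrt[n]{|Q|}\langle k\rangle^{1/(1-\alpha)}$ rather than raw $\langle k\rangle$-exponents).
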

\begin{proof}
Take $a$ to be an $h^p$ atom, and without loss of generality we may assume that $a$ is supported in a cube $Q$ centered at the origin.
We denote $Q^{\ast}$ the cube with side length $2\sqrt{n}l(Q)$ having the same center as $Q$, where the $l(Q)$ is the side length of $Q$.
By the same argument as in the proof of Proposition \ref{proposition, p-infty local Hardy to mod.}, we only need to verify
\begin{equation*}
\|a\|_{\widetilde{M}_{p,p}^{n(1-\alpha)(1-2/p),\alpha}}
:=\| \{\langle k \rangle^{n(1-2/p)}\|\widetilde{\Box}_k^{\alpha}a\|_{L^p}\}_{k\in \mathbb{Z}^n}\|_{l^p}\lesssim 1.
\end{equation*}
By the (quasi-)triangle inequality, we have
\begin{equation*}
  \begin{split}
\|a\|_{\widetilde{M}_{p,p}^{n(1-\alpha)(1-2/p),\alpha}}
=&\| \{\langle k \rangle^{n(1-2/p)}\|\widetilde{\Box}_k^{\alpha}a\|_{L^p}\}_{k\in \mathbb{Z}^n}\|_{l^p}
    \\
    \lesssim &
    \| \{\langle k \rangle^{n(1-2/p)}\|\widetilde{\Box}_k^{\alpha}a\chi_{Q^*}\|_{L^p}\}_{k\in \mathbb{Z}^n}\|_{l^p}
    +
    \| \{\langle k \rangle^{n(1-2/p)}\|\widetilde{\Box}_k^{\alpha}a\chi_{(Q^*)^c}\|_{L^p}\}_{k\in \mathbb{Z}^n}\|_{l^p}
  \end{split}
\end{equation*}
Recalling $\|\widetilde{\Box}_k^{\alpha}a\chi_{Q^{\ast}}\|_{L^p}\lesssim 1$ obtained in the proof of Proposition \ref{proposition, p-infty local Hardy to mod.}, we deduce that
\begin{equation}
  \begin{split}
    \|\{\langle k \rangle^{n(1-2/p)}\|\widetilde{\Box}_k^{\alpha}a\chi_{Q^*}\|_{L^p}\}_{k\in \mathbb{Z}^n}\|_{l^p}
    \lesssim
    \|\{\langle k \rangle^{n(1-2/p)}\}_{k\in \mathbb{Z}^n}\|_{l^p}
    \lesssim 1,
  \end{split}
\end{equation}
where we use the fact $p<1$.
On the other hand, by the same method as in the proof of Proposition \ref{proposition, p-infty local Hardy to mod.}, we obtain that
\begin{equation}
  \begin{split}
    &\| \{\langle k \rangle^{n(1-2/p)}\|\widetilde{\Box}_k^{\alpha}a\chi_{(Q^*)^c}\|_{L^p}\}_{\sqrt[n]{|Q|}\langle k \rangle^{\frac{1}{1-\alpha}}<1}\|_{l_k^p}
    \\
    \lesssim &
    \| \{\langle k \rangle^{-n/p}(\sqrt[n]{|Q|}\langle k \rangle^{\frac{1}{1-\alpha}})^{n(1-1/p)+N+1}\}_{\sqrt[n]{|Q|}\langle k \rangle^{\frac{1}{1-\alpha}}<1}\|_{l_k^p}
    \\
    =&
    (\sqrt[n]{|Q|})^{n(1-1/p)+N+1}\}
    \left(\sum_{\sqrt[n]{|Q|}\langle k \rangle^{\frac{1}{1-\alpha}}<1}\langle k\rangle^{-n}(\langle k \rangle^{\frac{p}{1-\alpha}})^{n(1-1/p)+N+1}\right)^{1/p}
    \lesssim 1.
  \end{split}
\end{equation}
where we use the fact $N+1+n(1-1/p)>0$.
We also have
\begin{equation}
  \begin{split}
    &\| \{\langle k \rangle^{n(1-2/p)}\|\widetilde{\Box}_k^{\alpha}a\chi_{(Q^*)^c}\|_{L^p}\}_{\sqrt[n]{|Q|}\langle k \rangle^{\frac{1}{1-\alpha}}\geqslant 1}\|_{l_k^p}
    \\
    \lesssim &
    \| \{\langle k \rangle^{-n/p}(\sqrt[n]{|Q|}\langle k \rangle^{\frac{1}{1-\alpha}})^{n(1-1/p)}\}_{\sqrt[n]{|Q|}\langle k \rangle^{\frac{1}{1-\alpha}}\geqslant 1}\|_{l_k^p}
    \\
    =&
    (\sqrt[n]{|Q|})^{n(1-1/p)}\}
    \left(\sum_{\sqrt[n]{|Q|}\langle k \rangle^{\frac{1}{1-\alpha}}\geqslant 1}\langle k\rangle^{-n}(\langle k \rangle^{\frac{p}{1-\alpha}})^{n(1-1/p)}\right)^{1/p}
    \lesssim 1,
  \end{split}
\end{equation}
where we use the fact $p<1$.
Thus,
\begin{equation}
  \begin{split}
    \| \{\langle k \rangle^{n(1-2/p)}\|\widetilde{\Box}_k^{\alpha}a\chi_{(Q^*)^c}\|_{L^p}\}_{k\in \mathbb{Z}^n}\|_{l^p}
    \lesssim &
    \| \{\langle k \rangle^{n(1-2/p)}\|\widetilde{\Box}_k^{\alpha}a\chi_{(Q^*)^c}\|_{L^p}\}_{\sqrt[n]{|Q|}\langle k \rangle^{\frac{1}{1-\alpha}}<1}\|_{l_k^p}
    \\
    +&
    \| \{\langle k \rangle^{n(1-2/p)}\|\widetilde{\Box}_k^{\alpha}a\chi_{(Q^*)^c}\|_{L^p}\}_{\sqrt[n]{|Q|}\langle k \rangle^{\frac{1}{1-\alpha}}\geqslant 1}\|_{l_k^p}
    \lesssim 1.
  \end{split}
\end{equation}
\end{proof}

\section{Proof of Theorem \ref{theorem, sharpness of embedding from local Hardy spaces to alpha-modulation spaces}}
In this section, we give the proof of Theorem \ref{theorem, sharpness of embedding from local Hardy spaces to alpha-modulation spaces}, i.e. the sharp conditions of $h^{p_1}\subset M_{p_2,q_2}^{s_2,\alpha}$.

\subsection{Necessity of Theorem \ref{theorem, sharpness of embedding from local Hardy spaces to alpha-modulation spaces}}
\label{Necessity of Theorem local Hardy to mod.}
To verify the necessity, we first show that the embedding $h^{p_1}\subset M_{p_2,q_2}^{s_2,\alpha}$
actually implies the embedding about corresponding weighted sequences.

\begin{proposition}\label{Proposition, Necessity of local Hardy to mod.}
  Let $0<p_1<\infty$, $0<p_2, q_2\leqslant\infty$, $s\in\mathbb{R}$.
  If $h^{p_1}\subset M_{p_2,q_2}^{s_2,\alpha}$, then we have
  \begin{enumerate}
    \item $$1/p_2\leqslant 1/p_1,$$
    \item $$l_{p_1}^{n\left(1-1/p_1\right),1}\subset l_{q_2}^{n(1-\alpha)/q_2+n\alpha\left(1-1/p_2\right)+s_2,1},$$
    \item $$l_{p_1}^{n\alpha(1-1/p_1),\alpha}\subset l_{q_2}^{n\alpha(1-1/p_2)+s_2,\alpha}.$$
  \end{enumerate}
\end{proposition}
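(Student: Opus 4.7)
The plan is to realize each of the three necessary conditions by constructing an explicit family of test functions whose $h^{p_1}$-norm and $M_{p_2,q_2}^{s_2,\alpha}$-norm can be computed on both sides, and then invoking the hypothesized embedding.

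For $1/p_2\leqslant 1/p_1$, I would fix a Schwartz bump $\phi$ supported in a unit cube and form $f_N=\sum_{j=1}^{N}\phi(\cdot-y_j)$ with $\{y_j\}_{j=1}^{N}$ a widely-separated set of translates (so the supports are disjoint). The atomic characterization of $h^{p_1}$ gives $\|f_N\|_{h^{p_1}}\lesssim N^{1/p_1}$, while each translate contributes the same amount locally in frequency, yielding $\|f_N\|_{M_{p_2,q_2}^{s_2,\alpha}}\gtrsim N^{1/p_2}$. Letting $N\to\infty$ forces $1/p_2\leqslant 1/p_1$.

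For condition (2), I would pick a Schwartz $g$ with $\widehat{g}$ supported in an annulus $\{|\xi|\sim 1\}$ and set $g_j(x)=2^{jn}g(2^j x)$, so that $\widehat{g_j}$ lives in $\{|\xi|\sim 2^j\}$. By Proposition \ref{equivlent of Lp and hp with Fourier support}, $\|g_j\|_{h^{p_1}}\sim\|g_j\|_{L^{p_1}}\sim 2^{jn(1-1/p_1)}$, so up to normalization $g_j$ is an $h^{p_1}$-atom. Given $\{b_j\}\in l_{p_1}^{n(1-1/p_1),1}$, form the widely-spaced superposition $f=\sum_j b_j g_j(\cdot-y_j)$; the atomic decomposition of $h^{p_1}$ yields
\begin{equation*}
\|f\|_{h^{p_1}}\lesssim\Bigl(\sum_j|b_j|^{p_1}2^{jn(1-1/p_1)p_1}\Bigr)^{1/p_1}=\|\{b_j\}\|_{l_{p_1}^{n(1-1/p_1),1}}.
\end{equation*}
Because only $\sim 2^{jn(1-\alpha)}$ indices $k$ with $\langle k\rangle^{1/(1-\alpha)}\sim 2^j$ contribute to $\Box_k^\alpha g_j$, and each contribution has $L^{p_2}$-norm of order $2^{jn\alpha(1-1/p_2)}$, summing up shows $\|f\|_{M_{p_2,q_2}^{s_2,\alpha}}\sim\|\{b_j\}\|_{l_{q_2}^{n(1-\alpha)/q_2+n\alpha(1-1/p_2)+s_2,1}}$; combining with the hypothesized embedding yields (2). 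Condition (3) is handled analogously, with the dyadic scale replaced by the $\alpha$-cube scale: the building block becomes
\begin{equation*}
G_k(x)=\langle k\rangle^{\tfrac{n\alpha}{1-\alpha}}g\bigl(\langle k\rangle^{\tfrac{\alpha}{1-\alpha}}x\bigr)e^{2\pi i\langle k\rangle^{\alpha/(1-\alpha)}k\cdot x},
\end{equation*}
whose Fourier transform is concentrated on the $k$-th $\alpha$-cube. Again Proposition \ref{equivlent of Lp and hp with Fourier support} gives $\|G_k\|_{h^{p_1}}\sim\langle k\rangle^{n\alpha(1-1/p_1)/(1-\alpha)}$, and forming $f=\sum_k a_k G_k(\cdot-y_k)$ with well-separated $\{y_k\}$ produces an $h^{p_1}$-function whose norm is bounded by the $l_{p_1}^{n\alpha(1-1/p_1),\alpha}$-norm of $\{a_k\}$, while its $M_{p_2,q_2}^{s_2,\alpha}$-norm is comparable to $\|\{a_k\}\|_{l_{q_2}^{n\alpha(1-1/p_2)+s_2,\alpha}}$.

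The main obstacle will be the rigorous justification of the two-sided matching of norms for these superpositions. On the $h^{p_1}$ side one needs the spatial separations $\{y_j\}$ (resp.\ $\{y_k\}$) large enough that the atomic decomposition genuinely yields an $\ell^{p_1}$ bound, which is delicate for $p_1\leqslant 1$ because of the quasi-triangle inequality interacting with the tails of the $G_k$. On the $M_{p_2,q_2}^{s_2,\alpha}$ side one must show that the contribution of $\Box_k^\alpha$ to the superposition is essentially the contribution from the single frequency-matching block; this is where the well-separation of the Fourier supports of the building blocks together with the Fourier-localization equivalence of Proposition \ref{equivlent of Lp and hp with Fourier support} does the work. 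Once these two matchings are in place, the embedding $h^{p_1}\subset M_{p_2,q_2}^{s_2,\alpha}$ transfers directly to the three asserted sequence-level inequalities.
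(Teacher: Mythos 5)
Your proposal is correct in outline and, for the substantive parts (2) and (3), it is essentially the paper's own construction: lacunary blocks $g_j$ with $\widehat{g_j}$ in the $j$-th dyadic annulus (resp.\ modulated bumps $G_k$ adapted to the $k$-th $\alpha$-box), superposed with spatial translations pushed to infinity so that the $L^{p}$ norms decouple, after which both sides of the embedding collapse to the asserted weighted sequence norms. Two points of comparison are worth recording. First, for part (1) you use a counting argument with $N$ disjointly supported copies of a fixed bump ($N^{1/p_2}\lesssim N^{1/p_1}$), whereas the paper dilates a single low-frequency bump $\widehat{f_\lambda}(\xi)=\widehat f(\xi/\lambda)$ and lets $\lambda\to0^+$, exploiting that for such functions both norms reduce to Lebesgue norms; both arguments are valid, yours trading a scaling computation for a separation/limit argument. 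Second, the step you flag as the "main obstacle" — controlling $\|\sum_j b_j g_j(\cdot-y_j)\|_{h^{p_1}}$ — is where your appeal to the atomic decomposition is not quite right: the $g_j$ are Schwartz, not compactly supported, so they are not $h^{p_1}$-atoms. The paper sidesteps atoms entirely by using the Littlewood--Paley characterization $h^{p_1}=F^0_{p_1,2}$: since $\Delta_l g_j=\delta_{lj}g_j$, one has $\|G^h\|_{h^{p_1}}=\bigl\|\bigl(\sum_j|a_jg_j^h|^2\bigr)^{1/2}\bigr\|_{L^{p_1}}$ exactly, and letting the translation parameter $h\to\infty$ disjointifies the supports and yields the $\ell^{p_1}$ expression uniformly in $p_1$ (for part (3) and $p_1\leqslant1$ one can instead use the quasi-triangle inequality in $h^{p_1}$ together with Proposition \ref{equivlent of Lp and hp with Fourier support}, which needs no separation at all). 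With that substitution your argument closes; the $M^{s_2,\alpha}_{p_2,q_2}$ lower bounds via the counts $|\widetilde{\Gamma_j}|\sim2^{jn(1-\alpha)}$ and $\|\mathscr{F}^{-1}\eta_k^{\alpha}\|_{L^{p_2}}\sim\langle k\rangle^{\frac{n\alpha}{1-\alpha}(1-1/p_2)}$ are exactly as in the paper.
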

\begin{proof}
We only state the proof for $q_2<\infty$,
since the case of $q_2=\infty$ can be treated by the same method with a slight modification.
By the Littlewood-Paley characterization of local Hardy space ,
we actually have
\begin{equation}\label{for proof, 1}
  \|f\|_{M^{s_2,\alpha}_{p_2,q_2}}\lesssim \|f\|_{F_{p_1,2}^0}.
\end{equation}

Let $f\in\mathscr{S}$ be a nonzero function whose Fourier transform has compact support in $B(0,1)$.
Set $\widehat{f_{\lambda}}(\xi)=\widehat{f}(\xi/\lambda)$, $\lambda>0$.
Using (\ref{for proof, 1}) and the local property of $\alpha$-modulation and local hardy space,
we obtain $\|f_{\lambda}\|_{L^{p_2}}\lesssim\|f_{\lambda}\|_{L^{p_1}}$ for sufficiently small $\lambda$
and consequently ${\lambda}^{n(1-1/{p_2})}\lesssim{\lambda}^{n(1-1/{p_1})}$.
Letting $\lambda\rightarrow0^+$, we deduce
\begin{equation}\label{p condition of local hardy to mod.}
1/p_2\leqslant 1/p_1,
\end{equation}
which is the first desired condition.

Let $g$ be a nonzero Schwartz function whose Fourier transform has compact support in $\{\xi:3/4\leqslant|\xi|\leqslant 4/3\}$,
satisfying $g(\xi)=1$ on $\{\xi:7/8\leqslant|\xi|\leqslant 8/7\}$ .
Set $\widehat{g_j}(\xi):=\widehat{g}(\xi/2^j)$.
By the definition of $\Delta_j$, we have $\Delta_j g_j=g_j$ for $j\geqslant 0$, and $\Delta_l g_j=0$ if $l\neq j$.
Denote
\begin{equation}
  \widetilde{\Gamma_j}=\big\{k\in \mathbb{Z}^n: \text{supp} \eta_k^{\alpha}\subset \{\xi:(7/8)\cdot 2^j\leqslant|\xi|\leqslant (8/7)\cdot 2^j\}\big\},
\end{equation}
we have $|\widetilde{\Gamma_j}|\sim 2^{jn(1-\alpha)}$ for $j\geqslant N$, where $N$ is a sufficiently large number.
For a truncated (only finite nonzero items) nonnegative sequence $\vec{a}=\{a_j\}_{j\in \mathbb{N}}$,
we define
\begin{equation}
  G_N^h:=\sum\limits_{j\geqslant N} a_j g_j^h,\hspace{6mm}g_j^h(x):=g_j(x+jhe_0),
\end{equation}
where $h\in \mathbb{R}$, $e_0=(1,0,\cdots,0)$ is the unit vector of $\mathbb{R}^n$.
By the definition of $\alpha$-modulation space, we obtain that
\begin{equation}\label{test function for hardy to mod. left}
  \begin{split}
    &\|G_N^h\|_{M_{p_2,q_2}^{s_2,\alpha}}
      =\left( \sum_{k\in\mathbb{Z}^n}\langle k\rangle^{\frac{s_2q_2}{1-\alpha}}\|\Box_k^{\alpha}G_N^h\|_{L^{p_2}}^{q_2}\right)^{1/{q_2}}
      \geqslant \left(  \sum_{j\in \mathbb{N}_0} \sum_{k\in \widetilde{\Gamma_j}}
      \langle k \rangle^{\frac{s_2q_2}{1-\alpha}}\|\Box_k^{\alpha} G_N^h\|_{L^{p_2}}^{q_2}\right)^{1/{q_2}}
      \\
      &
      =\left(  \sum_{j=N}^{\infty} \sum_{k\in \widetilde{\Gamma_j}} a_j^{q_2} \|\mathscr{F}^{-1}\eta_k^{\alpha}\|_{L^{p_2}}^{q_2}
        \langle k \rangle^{\frac{s_2q_2}{1-\alpha}} \right)^{1/{q_2}}
      \sim\left(  \sum_{j=N}^{\infty} \sum_{k\in \widetilde{\Gamma_j}}  a_j^{q_2} \langle k \rangle^{\frac{n\alpha q_2}{1-\alpha}(1-1/p_2)}
        2^{js_2q_2} \right)^{1/{q_2}}
      \\&
      \sim \left( \sum_{j=N}^{\infty} \sum_{k\in \widetilde{\Gamma_j}} a_j^{q_2}
        2^{j{q_2}(n\alpha(1-1/p_2)+s_2)} \right)^{1/{q_2}}
      \sim\left( \sum_{j=N}^{\infty} |\widetilde{\Gamma_j}|a_j^{q_2}
      2^{jq_2(n\alpha(1-1/p_2)+s_2)} \right)^{1/{q_2}}
      \\&
      =\left( \sum_{j=N}^{\infty} a_j^{q_2}
      2^{jq_2(n\alpha(1-1/p_2)+s_2)}2^{jn(1-\alpha)} \right)^{1/{q_2}}
      =\|\{a_j\}_{j\geqslant N}\|_{l_{q_2}^{n(1-\alpha)/q_2+n\alpha\left(1-1/p_2\right)+s_2,1}}.
  \end{split}
\end{equation}

On the other hand,  using the orthogonality of $\{g_j^h\}$ as $h\rightarrow \infty$, we deduce that
\begin{equation}\label{test function for hardy to mod. right}
  \begin{split}
    \|G_N^h\|_{F_{p_1,2}^0}&=\left\| \left( \sum_{j\in \mathbb{N}_0} {|\Delta_j G_N^h|}^2 \right)^{1/2}\right\|_{L^{p_1}}
                            =\left\| \left( \sum_{j=N}^{\infty} |a_j g_j^h|^2 \right)^{1/2} \right\|_{L^{p_1}}
                          \\
                           &=\left( \int_{\mathbb{R}^n}\left( \sum_{j=N}^{\infty} |a_j g_j^h|^2 \right)^{{p_1}/{2}} dx \right)^{1/{p_1}}
                            \xlongrightarrow[]{h\rightarrow\infty}
                            \left( \int_{\mathbb{R}^n} \sum_{j=N}^{\infty} |a_j g_j|^{p_1}  dx \right)^{1/{p_1}}
                          \\
                           &\simeq \left( \sum_{j=N}^{\infty} a_j^{p_1}2^{jn(1-1/{p_1})p_1} \right)^{1/{p_1}}
                            =\|\{a_j\}_{j\geqslant N}\|_{l_{p_1}^{n\left(1-1/p_1\right),1}}.
  \end{split}
\end{equation}

Combining (\ref{test function for hardy to mod. right}) and (\ref{test function for hardy to mod. left}), we have
$$
\|\{a_j\}_{j\geqslant N}\|_{l_{q_2}^{n(1-\alpha)/q_2+n\alpha\left(1-1/p_2\right)+s_2,1}}
\lesssim
\|\{a_j\}_{j\geqslant N}\|_{l_{p_1}^{n\left(1-1/p_1\right),1}},
$$
which implies the desired embedding
$l_{p_1}^{n\left(1-1/p_1\right),1}\subset l_{q_2}^{n(1-\alpha)/q_2+n\alpha\left(1-1/p_2\right)+s_2,1}$.

Next, we turn to the proof of $l_{p_1}^{n\alpha(1-1/p_1),\alpha}\subset l_{q_2}^{n\alpha(1-1/p_2)+s_2,\alpha}.$
Let $f\in\mathscr{S}$ be a nonzero smooth function whose Fourier transform has small support, such that
$\Box_k^{\alpha}f_k=f_k$ and $\Box_l^{\alpha}f_k=0$ if $k\neq l$,
where we denote $\widehat{f_k}(x)=\widehat{f}\left(\frac{\xi-\langle k\rangle^{\frac{\alpha}{1-\alpha}} k}{\langle k\rangle^{\frac{\alpha}{1-\alpha}}}\right)$.
For a truncated (only finite nonzero items) nonnegative sequence $\vec{b}=\{b_k\}_{k\in \mathbb{Z}^n}$,
we define
\begin{equation}
  F^h(x)=\sum\limits_{k\in \mathbb{Z}^n}b_kf_k^h,\hspace{6mm}f_k^h(x)=f_k(x-kh),
\end{equation}
where $h\in \mathbb{R}$.
By a direct computation, we have
\begin{equation}\label{for proof, 2}
  \begin{split}
    \|F^h\|_{M_{p_2,q_2}^{s_2,\alpha}}
    = &
    \left( \sum_{k\in \mathbb{Z}^n}b_k^{q_2}\langle k\rangle^{\frac{s_2q_2}{1-\alpha}}\|f_k\|_{L^{p_2}}^{q_2} \right)^{1/q_2}
    \\
    \sim &
    \left( \sum_{k\in \mathbb{Z}^n}b_k^{q_2}\langle k\rangle^{\frac{s_2q_2}{1-\alpha}}
    \langle k\rangle^{\frac{n\alpha}{1-\alpha}(1-1/p_2)q_2} \right)^{1/q_2}
    \sim
    \|\vec{b}\|_{l_{q_2}^{n\alpha(1-1/p_2)+s_2,\alpha}}.
  \end{split}
\end{equation}
On the other hand,
for $p_1>1$, using the orthogonality of $\{f_k^h\}_{k\in \mathbb{Z}^n}$ as $h\rightarrow \infty$,
we obtain
\begin{equation*}
  \begin{split}
    \|F^h\|_{h^{p_1}}&\sim \|F^h\|_{L^{p_1}}
                       =    \left( \int_{\mathbb{R}^n}\left|\sum_{k\in \mathbb{Z}^n } b_k f_k^h\right|^{p_1}dx \right)^{\frac{1}{p_1}}
                       \xlongrightarrow[]{h\rightarrow\infty}
                            \left( \int_{\mathbb{R}^n}\sum_{k\in \mathbb{Z}^n } \left|b_k f_k \right|^{p_1}dx \right)^{\frac{1}{p_1}}
                       \\
                       &\sim \left( \sum_{k\in \mathbb{Z}^n}b_k^{p_1}\langle k\rangle^{\frac{n\alpha}{1-\alpha}(1-1/p_1)p_1} \right)^{1/p_1}
                       \sim \|\vec{b}\|_{l_{p_1}^{n\alpha(1-1/p_1),\alpha}}.
  \end{split}
\end{equation*}
For $p_1\leqslant1$, we use quasi-triangle inequality to deduce that
\begin{equation*}
  \begin{split}
    \|F^h\|_{h^{p_1}}^{p_1}&=         \|\sum_{k\in \mathbb{Z}^n}b_k f_k^h\|_{h^{p_1}}^{p_1}
                              \leqslant \sum_{k\in \mathbb{Z}^n}\|b_k f_k^h\|_{h^{p_1}}^{p_1}
                              =
                                        \sum_{k\in \mathbb{Z}^n}\|b_k f_k\|_{L^{p_1}}^{p_1}
                              \\
                              &\sim     \sum_{k\in \mathbb{Z}^n}b_k^{p_1}\langle k\rangle^{\frac{n\alpha}{1-\alpha}(1-1/p_1)p_1}
                              \sim      \|\vec{b}\|_{l_{p_1}^{n\alpha(1-1/p_1),\alpha}}^{p_1},
  \end{split}
\end{equation*}
where we use the fact $\|f_k\|_{L^{p_1}}\sim \|f_k\|_{L^{p_1}}$ (see Proposition \ref{equivlent of Lp and hp with Fourier support}).
So we have
\begin{equation}\label{for proof, 3}
  \lim_{h\rightarrow \infty}\|F^h\|_{h^{p_1}}\lesssim  \|\vec{b}\|_{l_{p_1}^{n\alpha(1-1/p_1),\alpha}}^{p_1}.
\end{equation}
Combining (\ref{for proof, 2}) and (\ref{for proof, 3}), we obtain
\begin{equation}
  \|\vec{b}\|_{l_{q_2}^{n\alpha(1-1/p_2)+s_2,\alpha}} \lesssim \|\vec{b}\|_{l_{p_1}^{n\alpha(1-1/p_1),\alpha}}
\end{equation}
which implies the desired embedding
$l_{p_1}^{n\alpha(1-1/p_1),\alpha}\subset  l_{q_2}^{n\alpha(1-1/p_2)+s_2,\alpha}.$
\end{proof}

Now, we are in the position to verify the necessity for $h^{p_1}\subset M_{p_2,q_2}^{s_2,\alpha}$.
By Proposition \ref{Proposition, Necessity of local Hardy to mod.}, we obtain $1/p_2\leqslant 1/p_1$ and
\begin{equation*}
  l_{p_1}^{n\left(1-1/p_1\right),1}\subset l_{q_2}^{n(1-\alpha)/q_2+n\alpha\left(1-1/p_2\right)+s_2,1},
  \hspace{6mm}
  l_{p_1}^{n\alpha(1-1/p_1),\alpha}\subset l_{q_2}^{n\alpha(1-1/p_2)+s_2,\alpha}.
\end{equation*}
By Lemma \ref{lemma, Sharpness of embedding, for alpha decomposition},
$l_{p_1}^{n\left(1-1/p_1\right),1}\subset l_{q_2}^{n(1-\alpha)/q_2+n\alpha\left(1-1/p_2\right)+s_2,1}$
implies $s_2\leqslant n\alpha(1/p_2-1/p_1)+n(1-\alpha)(1-1/p_1-1/q_2)$ and the inequality is strict if $1/q_2>1/p_1$.
On the other hand, by Lemma \ref{lemma, Sharpness of embedding, for alpha decomposition} and $l_{p_1}^{n\alpha(1-1/p_1),\alpha}\subset l_{q_2}^{n\alpha(1-1/p_2)+s_2,\alpha}$,
we obtain $s_2\leqslant n\alpha(1/p_2-1/p_1)$ for $1/q_2\leqslant 1/p_1$,
and $s_2< n(1-\alpha)(1/p_1-1/q_2)+n\alpha(1/p_2-1/p_1)$ for $1/q_2>1/p_1$.

Combining with the above estimates, we obtain
$s_2\leqslant n\alpha(1/p_2-1/p_1)+(1-\alpha)A(p_1,q_2)$, with strict inequality for $1/q_2> 1/p_1$.

\subsection{Sufficiency of Theorem \ref{theorem, sharpness of embedding from local Hardy spaces to alpha-modulation spaces}}
We only need to verify $h^{p_1}\subset M_{p_1,q_2}^{(1-\alpha)A(p_1,q_2),\alpha}$ for $1/q_2\leqslant 1/p_1$,
and $h^{p_1}\subset M_{p_1,q_2}^{(1-\alpha)A(p_1,q_2)-\epsilon,\alpha}$ for $1/q_2> 1/p_1$, where $\epsilon$ is any positive number.
Then the final conclusion follows by $M_{p_1,q_2}^{(1-\alpha)A(p_1,q_2),\alpha}\subset M_{p_2,q_2}^{n\alpha(1/p_2-1/p_1)+(1-\alpha)A(p_1,q_2),\alpha}$ or
$M_{p_1,q_2}^{(1-\alpha)A(p_1,q_2)+\epsilon,\alpha}\subset M_{p_2,q_2}^{n\alpha(1/p_2-1/p_1)+(1-\alpha)A(p_1,q_2)+\epsilon,\alpha}$.

\textbf{For $1/q_2\leqslant 1/p_1$.}
We want to verify that
\begin{equation*}
  h^{p_1}\subset M_{p_1,q_2}^{(1-\alpha)A(p_1,q_2),\alpha}.
\end{equation*}
Taking a fixed $f\in h^{p_1} (p_1\leqslant 1)$, we can find a collection of $h_{p_1}$-atoms $\{a_j\}_{j=1}^{\infty}$ and a sequence of complex numbers $\{\lambda_j\}_{j=1}^{\infty}$
which is depend on $f$,
such that $f=\sum_{j=1}^{\infty}\lambda_ja_j$ and
\begin{equation}
  \left(\sum_{j=1}^{\infty}|\lambda|^p\right)^{1/p}\leqslant C\|f\|_{h_p},
\end{equation}
where the constant $C$ is independent of $f$.
For $p_1\leqslant 1$, we use Proposition \ref{proposition, p-infty local Hardy to mod.} to deduce that
\begin{equation}\label{for proof, 7}
  \begin{split}
    \|f\|_{M_{p_1,\infty}^{(1-\alpha)A(p_1,\infty),\alpha}}
    =
    \|f\|_{M_{p_1,\infty}^{n(1-\alpha)(1-1/p_1),\alpha}}
    = &
    \|\sum_{j=1}^{\infty}\lambda_ja_j\|_{M_{p_1,\infty}^{n(1-\alpha)(1-1/p_1),\alpha}}
    \\
    \lesssim &
    \left(\sum_{j=1}^{\infty}\|\lambda_ja_j\|^{p_1}_{M_{p_1,\infty}^{n(1-\alpha)(1-1/p_1),\alpha}}\right)^{1/p_1}
    \\
    \lesssim &
    \left(\sum_{j=1}^{\infty}|\lambda_j|^{p_1}\right)^{1/p_1}\lesssim \|f\|_{h_{p_1}}.
  \end{split}
\end{equation}
Similarly, we use Proposition \ref{proposition, p-p local Hardy to mod.} to deduce that
\begin{equation}\label{for proof, 8}
  \|f\|_{M_{p_1,p_1}^{(1-\alpha)A(p_1,p_1),\alpha}}=\|f\|_{M_{p_1,p_1}^{n(1-\alpha)(1-2/p_1),\alpha}}\lesssim \|f\|_{h^{p_1}}
\end{equation}
for $p_1<1$.
By a direct calculation, we also have
\begin{equation}\label{for proof, 9}
  \|f\|_{M_{\infty,\infty}^{(1-\alpha)A(\infty,\infty),\alpha}}=\|f\|_{M_{\infty,\infty}^{0,\alpha}}
  =
  \sup_{k\in \mathbb{Z}^n}\|\Box_k^{\alpha}f\|_{L^{\infty}}\lesssim \|f\|_{L^{\infty}}.
\end{equation}
Recalling $\|f\|_{M_{2,2}^{(1-\alpha)A(2,2),\alpha}}=\|f\|_{M_{2,2}^{0,\alpha}}\sim \|f\|_{L^2}$, we obtain the following inequality:
\begin{equation}\label{for proof, 10}
  \|f\|_{M_{2,2}^{(1-\alpha)A(2,2),\alpha}}\lesssim \|f\|_{L^2}.
\end{equation}
By an interpolation among (\ref{for proof, 7}),(\ref{for proof, 8}),(\ref{for proof, 9}) and (\ref{for proof, 10}), we obtain the desired conclusion.

\textbf{For $1/q_2> 1/p_1$.}
We want to verify that
\begin{equation*}
  h^{p_1}\subset M_{p_1,q_2}^{(1-\alpha)A(p_1,q_2)-\epsilon,\alpha},
\end{equation*}
where $\epsilon$ is any fixed positive number.
In fact, observing that
\begin{equation*}
  ((1-\alpha)A(p_1,q_2)-\epsilon)/n+(1-\alpha)/q_2<((1-\alpha)A(p_1,p_1)/n+(1-\alpha)/p_1
\end{equation*}
for $1/q_2>1/p_1$,
we use Lemma \ref{Lemma, embedding of alpha modulation spaces} to deduce
\begin{equation*}
  M_{p_1,p_1}^{(1-\alpha)A(p_1,p_1),\alpha}\subset M_{p_1,q_2}^{(1-\alpha)A(p_1,q_2)-\epsilon,\alpha}.
\end{equation*}
Recalling $h^{p_1}\subset M_{p_1,p_1}^{(1-\alpha)A(p_1,p_1),\alpha}$, we obtain the desired conclusion:
\begin{equation*}
  h^{p_1}\subset M_{p_1,p_1}^{(1-\alpha)A(p_1,p_1),\alpha}\subset M_{p_1,q_2}^{(1-\alpha)A(p_1,q_2)-\epsilon,\alpha}
\end{equation*}
for any fixed positive number.

\section{Proof of Theorem \ref{theorem, sharpness of embedding from alpha-modulation spaces to local Hardy spaces}}\label{proof of Th. alpha mod. to local Hardy}
\subsection{Necessity of Theorem \ref{theorem, sharpness of embedding from alpha-modulation spaces to local Hardy spaces}}
As in the proof of Theorem \ref{theorem, sharpness of embedding from local Hardy spaces to alpha-modulation spaces},
we use following proposition to show that the necessity of $M_{p_2,q_2}^{s_2,\alpha}\subset h^{p_1}$ can be reduced to
the necessity of embedding between corresponding weighted sequences.

\begin{proposition}\label{Proposition, Necessity of mod to local Hardy.}
  Let $0<p_2<\infty$, $0<p_1, q_1\leqslant\infty$, $s_1\in\mathbb{R}$.
  If $M_{p_1,q_1}^{s_1,\alpha} \subset h^{p_2}$, then we have
  \begin{enumerate}
    \item $$1/p_2\leqslant 1/p_1,$$
    \item $$l_{q_1}^{n(1-\alpha)/q_1+n\alpha\left(1-1/p_1\right)+s_1,1}\subset l_{p_2}^{n\left(1-1/p_2\right),1},$$
    \item $$l_{q_1}^{n\alpha(1-1/p_1)+s_1,\alpha}\subset l_{p_2}^{n\alpha(1-1/p_2),\alpha}.$$
  \end{enumerate}
\end{proposition}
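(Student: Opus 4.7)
The plan is to mirror the proof of Proposition~\ref{Proposition, Necessity of local Hardy to mod.}, reducing the embedding $M^{s_1,\alpha}_{p_1,q_1}\subset h^{p_2}$ to discrete sequence embeddings by testing against two families of Schwartz functions indexed by a translation parameter $h$ and sending $h\to\infty$. The roles of the two sides are now swapped relative to Proposition~\ref{Proposition, Necessity of local Hardy to mod.}: the $M$-side is computed exactly (it is translation invariant) and yields an upper bound on the right-hand sequence norm, while the $h^{p_2}$-side requires a matching lower bound, which is the substantive point.

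For assertion (1), I reuse the low-frequency scaling argument. Take $f\in\mathscr{S}$ with $\operatorname{supp}\widehat f\subset B(0,1)$ and set $\widehat{f_\lambda}(\xi)=\widehat f(\xi/\lambda)$. For small $\lambda$ the Fourier support of $f_\lambda$ sits inside a single $\alpha$-decomposition cell near the origin, so $\|f_\lambda\|_{M^{s_1,\alpha}_{p_1,q_1}}\sim\|f_\lambda\|_{L^{p_1}}$; by Proposition~\ref{equivlent of Lp and hp with Fourier support} we also have $\|f_\lambda\|_{h^{p_2}}\sim\|f_\lambda\|_{L^{p_2}}$. The embedding hypothesis combined with the scaling identity $\|f_\lambda\|_{L^p}\simeq\lambda^{n(1-1/p)}\|f\|_{L^p}$ gives $\lambda^{n(1-1/p_2)}\lesssim\lambda^{n(1-1/p_1)}$, and letting $\lambda\to 0^+$ forces $1/p_2\leqslant 1/p_1$.

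For assertion (2), I test with the same dyadic family $G_N^h=\sum_{j\geqslant N}a_j g_j^h$ used in the earlier proposition. The $M$-side computation is formally identical and gives $\|G_N^h\|_{M^{s_1,\alpha}_{p_1,q_1}}\sim\|\{a_j\}\|_{l_{q_1}^{n(1-\alpha)/q_1+n\alpha(1-1/p_1)+s_1,1}}$. For the lower bound on $\|G_N^h\|_{h^{p_2}}$ I invoke $h^{p_2}=F^0_{p_2,2}$ together with $\Delta_l g_j^h=\delta_{jl}g_j^h$, giving
\[
\|G_N^h\|_{h^{p_2}}=\Bigl\|\Bigl(\sum_{j\geqslant N}|a_j g_j^h|^2\Bigr)^{1/2}\Bigr\|_{L^{p_2}}.
\]
As $h\to\infty$ the shifted Schwartz bumps $g_j^h$ become asymptotically disjoint in space (their centres $-jhe_0$ diverge), so by dominated convergence the right-hand side tends to $\bigl(\sum_{j\geqslant N}|a_j|^{p_2}\|g_j\|_{L^{p_2}}^{p_2}\bigr)^{1/p_2}\sim\|\{a_j\}\|_{l_{p_2}^{n(1-1/p_2),1}}$. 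Combining with the embedding hypothesis yields the second claim.

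For assertion (3), I test with the $\alpha$-localised family $F^h=\sum_k b_k f_k^h$, which satisfies the $h$-independent identity $\|F^h\|_{M^{s_1,\alpha}_{p_1,q_1}}\sim\|\vec b\|_{l_{q_1}^{n\alpha(1-1/p_1)+s_1,\alpha}}$. The main obstacle is a matching lower bound on $\|F^h\|_{h^{p_2}}$. When $p_2>1$ the identity $h^{p_2}\sim L^{p_2}$ together with $L^{p_2}$-orthogonality of the translates $f_k^h$ as $h\to\infty$ settles matters immediately. When $p_2\leqslant 1$ I pass through the Triebel--Lizorkin square function: since each $f_k^h$ is frequency-localised in a single dyadic annulus $\Lambda_j=\{k:|k|\sim 2^{j(1-\alpha)}\}$, one has $\Delta_j F^h=\sum_{k\in\Lambda_j}b_k f_k^h$; asymptotic spatial disjointness of the $f_k^h$ across both $k$ and $j$ as $h\to\infty$ collapses the pointwise square function to $\sum_k|b_k f_k^h|$, and dominated convergence then produces $\|F^h\|_{h^{p_2}}^{p_2}\to\sum_k|b_k|^{p_2}\|f_k\|_{L^{p_2}}^{p_2}\sim\|\vec b\|_{l_{p_2}^{n\alpha(1-1/p_2),\alpha}}^{p_2}$ for truncated sequences. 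A standard density argument extends this to all of $l_{q_1}^{n\alpha(1-1/p_1)+s_1,\alpha}$ and yields the third claim.
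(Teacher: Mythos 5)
Your overall strategy is the same as the paper's: the same scaling argument for (1), the same two test families $G^h=\sum_j a_jg_j^h$ and $F^h=\sum_k b_kf_k^h$ with the limit $h\to\infty$, and the same reduction to sequence embeddings. The one place where you genuinely diverge is in how you obtain the lower bound for the $h^{p_2}$-norm of the test functions, and that is also where your argument has a gap. The paper does not touch the square function at all in this direction: since $G^h$ and $F^h$ are Schwartz functions and $\psi_t*f\to f$ pointwise as $t\to 0$, the maximal-function definition gives the elementary inequality $\|f\|_{L^{p_2}}\lesssim\|f\|_{h^{p_2}}$, and one then only has to compute $\lim_{h\to\infty}\|F^h\|_{L^{p_2}}$ by asymptotic disjointness. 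You instead insist on evaluating the $F^0_{p_2,2}$-norm exactly. In part (2) this is fine, because $\widehat{g}$ is supported where $\phi_j\equiv 1$, so $\Delta_lg_j=\delta_{jl}g_j$ holds exactly. In part (3), however, your identity $\Delta_jF^h=\sum_{k\in\Lambda_j}b_kf_k^h$ is false in general: the Fourier support of $f_k$ is a small ball centred at $\langle k\rangle^{\alpha/(1-\alpha)}k$, which can straddle the transition region between $\phi_j$ and $\phi_{j+1}$, so that $\Delta_jf_k$ is a nontrivial smooth truncation of $f_k$ rather than $f_k$ or $0$. For $p_2\leqslant 1$ you cannot simply discard these cross terms, and controlling $\|\Delta_jf_k^h\|_{L^{p_2}}$ would require an extra band-limited Young-type estimate and a bounded-overlap count.

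The gap is repairable (each $f_k$ meets only $O(1)$ dyadic annuli, so the square function is still comparable to $\sum_k|b_kf_k^h|$ up to constants), but the cleaner fix is the paper's: drop the square function and use $\|F^h\|_{L^{p_2}}\lesssim\|F^h\|_{h^{p_2}}$ directly, which costs nothing since you only need a lower bound on the local Hardy norm. Two smaller remarks: on the $M$-side of part (2) you should only claim the upper bound $\lesssim$ (the index set of $k$ with $\Box_k^{\alpha}G^h\neq 0$ is the set of cells \emph{meeting} the annulus, so the sum over it overestimates), which is all that the necessity argument needs; and the final ``density argument'' in (3) is unnecessary --- the inequality for all truncated sequences with a uniform constant already yields the sequence embedding by monotone convergence.
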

\begin{proof}
Take $f\in\mathscr{S}$ to be a nonzero smooth function whose Fourier transform has compact support in $B(0,1)$.
Denote $\widehat{f_{\lambda}}(\xi)=\widehat{f}(\xi/\lambda)$, $\lambda>0$.
By the assumption, we have
\begin{equation}
  \|f_{\lambda}\|_{F_{p_2,2}^0}\lesssim \|f_{\lambda}\|_{M^{s_1,\alpha}_{p_1,q_1}}
\end{equation}
for $\lambda\leqslant 1$.
By the local property of $\alpha$-modulation and Triebel space, we actually have
$$\|f_{\lambda}\|_{L^{p_2}}\lesssim\|f_{\lambda}\|_{L^{p_1}},$$
and consequently ${\lambda}^{n(1-1/{p_2})}\lesssim{\lambda}^{n(1-1/{p_1})}$.
Letting $\lambda\rightarrow0^+$, we conclude
\begin{equation}\label{p condition of mod. to local hardy}
1/p_2\leqslant1/p_1.
\end{equation}
Let $g$ be a nonzero Schwartz function whose Fourier transform has compact support in $\{\xi:3/4\leqslant|\xi|\leqslant 4/3\}$.
Set $\widehat{g_j}(\xi):=\widehat{g}(\xi/2^j)$.
By the definition of $\Delta_j$, we have $\Delta_j g_j=g_j$ for $j\geqslant 0$, and $\Delta_l g_j=0$ if $l\neq j$.
Denote
\begin{equation}
  \Gamma_j=\big\{k\in \mathbb{Z}^n: \text{supp} \eta_k^{\alpha}\cap \{\xi:(3/4)\cdot 2^j\leqslant|\xi|\leqslant (4/3)\cdot 2^j\}\neq \emptyset \big\}.
\end{equation}
We have $|\Gamma_j|\sim 2^{jn(1-\alpha)}$.
For a truncated (only finite nonzero items) nonnegative sequence $\vec{a}=\{a_j\}_{j\in \mathbb{N}_0}$,
we define
\begin{equation}
  G^h:=\sum\limits_{j\in \mathbb{N}_0} a_j g_j^h,\hspace{6mm}g_j^h(x):=g_j(x+jhe_0),
\end{equation}
where $h\in \mathbb{R}$, $e_0=(1,0,\cdots,0)$ is the unit vector in $\mathbb{R}^n$.
By the definition of $\alpha$-modulation space, we obtain that
\begin{equation}\label{test function for mod to hardy. right}
  \begin{split}
    &\|G^h\|_{M_{p_1,q_1}^{s_1,\alpha}}
      =\left( \sum_{k\in\mathbb{Z}^n}\langle k\rangle^{\frac{s_1q_1}{1-\alpha}}\|\Box_k^{\alpha}G^h\|_{L^{p_1}}^{q_1}\right)^{\frac{1}{q_1}}
      \lesssim \left(  \sum_{j\in \mathbb{N}_0} \sum_{k\in \Gamma_j}
      \langle k \rangle^{\frac{s_1q_1}{1-\alpha}}\|a_j\Box_k^{\alpha} g_j^h\|_{L^{p_1}}^{q_1}\right)^{\frac{1}{q_1}}
      \\
      &
      \lesssim\left(  \sum_{j\in \mathbb{N}_0} \sum_{k\in \Gamma_j} a_j^{q_1} \|\mathscr{F}^{-1}\eta_k^{\alpha}\|_{L^{p_1}}^{q_1}
        \langle k \rangle^{\frac{s_1q_1}{1-\alpha}} \right)^{\frac{1}{q_1}}
      \sim\left(  \sum_{j\in \mathbb{N}_0} \sum_{k\in \Gamma_j}  a_j^{q_1} \langle k \rangle^{\frac{n\alpha q_1}{1-\alpha}(1-1/p_1)}
        2^{js_1q_1} \right)^{\frac{1}{q_1}}
      \\&
      \sim \left( \sum_{j\in \mathbb{N}_0} \sum_{k\in \Gamma_j} a_j^{q_1}
        2^{j{q_1}(n\alpha(1-{1}/{p_1})+s_1)} \right)^{\frac{1}{q_1}}
      \sim\left( \sum_{j\in \mathbb{N}_0} |\Gamma_j|a_j^{q_1}
      2^{jq_1(n\alpha(1-1/p_1)+s_1)} \right)^{\frac{1}{q_1}}
      \\&
      =\left( \sum_{j\in \mathbb{N}_0} a_j^{q_1}
      2^{jq_1(n\alpha(1-1/p_1)+s_1)}2^{jn(1-\alpha)} \right)^{\frac{1}{q_1}}
      =\|\vec{a}\|_{l_{q_1}^{n(1-\alpha)/q_1+n\alpha\left(1-1/p_1\right)+s_1,1}}.
  \end{split}
\end{equation}

On the other hand,  by the same argument as in the proof of Proposition \ref{Proposition, Necessity of local Hardy to mod.}, we deduce that
\begin{equation}
    \lim_{h\rightarrow \infty}\|G^h\|_{L^{p_2}}=\|\vec{a}\|_{l_{p_2}^{n\left(1-1/p_2\right),1}}.
\end{equation}
Since $G^h$ is a Schwartz function, by the definition of local Hardy space, we have $\|G^h\|_{L^{p_2}}\lesssim \|G^h\|_{h^{p_2}}$. Thus,
\begin{equation}\label{test function for mod to hardy. left}
  \|\vec{a}\|_{l_{p_2}^{n\left(1-1/p_2\right),1}}=\lim_{h\rightarrow \infty}\|G^h\|_{L^{p_2}}\lesssim \lim_{h\rightarrow \infty}\|G^h\|_{h^{p_2}}.
\end{equation}

Combining (\ref{test function for mod to hardy. right}) and (\ref{test function for mod to hardy. left}), we have
$$
\|\vec{a}\|_{l_{p_2}^{n\left(1-1/p_2\right),1}}
\lesssim
\|\vec{a}\|_{l_{q_1}^{n(1-\alpha)/q_1+n\alpha\left(1-1/p_1\right)+s_1,1}},
$$
which implies the desired embedding
$l_{q_1}^{n(1-\alpha)/q_1+n\alpha\left(1-1/p_1\right)+s_1,1}\subset l_{p_2}^{n\left(1-1/p_2\right),1}.$

Next, we turn to verify $l_{q_1}^{n\alpha(1-1/p_1)+s_1,\alpha}\subset l_{p_2}^{n\alpha(1-1/p_2),\alpha}.$
Let $f\in\mathscr{S}$ be a nonzero smooth function whose Fourier transform has small support, such that
$\Box_k^{\alpha}f_k=f_k$ and $\Box_l^{\alpha}f_k=0$ if $k\neq l$,
where we denote $\widehat{f_k}(x)=\widehat{f}\left(\frac{\xi-\langle k\rangle^{\frac{\alpha}{1-\alpha}} k}{\langle k\rangle^{\frac{\alpha}{1-\alpha}}}\right)$.
For a truncated (only finite nonzero items) nonnegative sequence $\vec{b}=\{b_k\}_{k\in \mathbb{Z}^n}$,
we define
\begin{equation}
  F^h(x)=\sum\limits_{k\in \mathbb{Z}^n}b_kf_k^h,\hspace{6mm}f_k^h(x)=f_k(x-kh),
\end{equation}
where $h\in \mathbb{R}$.
Observing that $F^h$ is a Schwartz function and $\|F^h\|_{L^{p_2}}\leqslant \|F^h\|_{h^{p_2}}$,
we use the assumption to deduce
\begin{equation}
  \|F^h\|_{L^{p_2}}\lesssim \|F^h\|_{M_{p_1,q_1}^{s_1,\alpha}}.
\end{equation}
By the same argument as in the proof of Proposition \ref{Proposition, Necessity of local Hardy to mod.}, we obtain
\begin{equation*}
    \lim_{h\rightarrow \infty}\|F^h\|_{L^{p_2}} \sim \|\vec{b}\|_{l_{p_2}^{n\alpha(1-1/p_2),\alpha}},
    \hspace{6mm}
    \|F^h\|_{M_{p_1,q_1}^{s_1,\alpha}}
         \sim\|\vec{b}\|_{l_{q_1}^{n\alpha(1-1/p_1)+s_1,\alpha}},
\end{equation*}
which implies
\begin{equation}
  \|\vec{b}\|_{l_{p_2}^{n\alpha(1-1/p_2),\alpha}}\lesssim\|\vec{b}\|_{l_{q_1}^{n\alpha(1-1/p_1)+s_1,\alpha}}.
\end{equation}
By the arbitrary of $\vec{b}$, we obtain the desired conclusion.
\end{proof}

Now, we turn to verify the necessity of $M_{p_1,q_1}^{s_1,\alpha}\subset h^{p_2}$.
Using Proposition \ref{Proposition, Necessity of mod to local Hardy.}, we obtain $1/p_2\leqslant 1/p_1$ and the embedding relations:
\begin{equation*}
  l_{q_1}^{n(1-\alpha)/q_1+n\alpha\left(1-1/p_1\right)+s_1,1}\subset l_{p_2}^{n\left(1-1/p_2\right),1},
  \hspace{6mm}
  l_{q_1}^{n\alpha(1-1/p_1)+s_1,\alpha}\subset l_{p_2}^{n\alpha(1-1/p_2),\alpha}.
\end{equation*}
By Lemma \ref{lemma, Sharpness of embedding, for alpha decomposition},
$l_{q_1}^{n(1-\alpha)/q_1+n\alpha\left(1-1/p_1\right)+s_1,1}\subset l_{p_2}^{n\left(1-1/p_2\right),1}$
implies $s_1\geqslant n\alpha(1/p_1-1/p_2)+n(1-\alpha)(1-1/p_2-1/q_1)$ and the inequality is strict if $1/p_2>1/q_1$.
On the other hand, by Lemma \ref{lemma, Sharpness of embedding, for alpha decomposition} and $l_{q_1}^{n\alpha(1-1/p_1)+s_1,\alpha}\subset l_{p_2}^{n\alpha(1-1/p_2),\alpha}$,
we obtain $s_1\geqslant n\alpha(1/p_1-1/p_2)$ for $1/p_2\leqslant 1/q_1$,
and $s_1> n\alpha(1/p_1-1/p_2)+n(1-\alpha)(1/p_2-1/q_1)$ for $1/p_2>1/q_1$.

Combining with the above estimates, we obtain
$s_1\geqslant n\alpha(1/p_1-1/p_2)+(1-\alpha)B(p_2,q_1)$, with strict inequality for $1/p_2> 1/q_1$.

\subsection{Sufficiency of Theorem \ref{theorem, sharpness of embedding from alpha-modulation spaces to local Hardy spaces}}
As in the proof of Theorem \ref{theorem, sharpness of embedding from local Hardy spaces to alpha-modulation spaces},
we actually only need to verify $M_{p_2,q_1}^{(1-\alpha)B(p_2,q_1),\alpha}\subset h^{p_2}$ for $1/p_2\leqslant 1/q_1$,
and $M_{p_2,q_1}^{(1-\alpha)B(p_2,q_1)+\epsilon,\alpha}\subset h^{p_2}$ for $1/p_2> 1/q_1$, where $\epsilon$ is any positive number.

\textbf{For $1/p_2\leqslant 1/q_1.$} We want to verify
\begin{equation}\label{for proof, 11}
  M_{p_2,q_1}^{(1-\alpha)B(p_2,q_1),\alpha}\subset h^{p_2}.
\end{equation}
Observing that $B(p_2,q_1)=0$ and $M_{p_2,q_1}^{0,\alpha}\subset M_{p_2,\widetilde{q_1}}^{0,\alpha}$ for $1/q_1\geqslant (1-1/p_2)\vee 1/p_2$,
where $1/\widetilde{q_1}=(1-1/p_2)\vee 1/p_2$, we only need to prove (\ref{for proof, 11}) for $1/p_2\leqslant 1/q_1\leqslant 1-1/p_2$ and $p_2=q_1\leqslant 2$.

By a dual argument, in $1/p_2\leqslant 1/q_1\leqslant 1-1/p_2$, (\ref{for proof, 11}) can be verified by the sufficiency of Theorem \ref{theorem, sharpness of embedding from local Hardy spaces to alpha-modulation spaces}
proved before. Thus, we only need to verify (\ref{for proof, 11}) for $p_2=q_1\leqslant 2$.
Observing that (\ref{for proof, 11}) has been verified for $p_2=q_1=2$, by an interpolation argument, we only need to show (\ref{for proof, 11}) for $p_2=q_1\leqslant 1$.

In fact, when $p_2=q_1\leqslant 1$, we use the quasi-triangle inequality to deduce that
\begin{equation}\label{for proof, 12}
  \|f\|_{h^{p_2}}=\|\sum_{k\in \mathbb{Z}^n}\Box_k^{\alpha}f\|_{h^{p_2}}
  \leqslant
  \left(\sum_{k\in \mathbb{Z}^n}\|\Box_k^{\alpha}f\|^{p_2}_{h^{p_2}}\right)^{1/p_2}
  \sim
  \left(\sum_{k\in \mathbb{Z}^n}\|\Box_k^{\alpha}f\|^{p_2}_{L^{p_2}}\right)^{1/p_2}
  =
  \|f\|_{M_{p_2,q_1}^{0,\alpha}},
\end{equation}
which is just the embedding (\ref{for proof, 11}) for $p_2=q_1\leqslant 1$.

\textbf{For $1/p_2> 1/q_1.$} We want to verify
\begin{equation*}
  M_{p_2,q_1}^{(1-\alpha)B(p_2,q_1)+\epsilon,\alpha}\subset h^{p_2}
\end{equation*}
for any fixed positive number $\epsilon$.
In fact, by the embedding $M_{p_2,p_2}^{(1-\alpha)B(p_2,p_2),\alpha}\subset h^{p_2}$ proved before,
observing that
\begin{equation*}
  n(1-\alpha)/p_2+(1-\alpha)B(p_2,p_2)<n(1-\alpha)/q_1+(1-\alpha)B(p_2,q_1)+\epsilon,
\end{equation*}
we use Lemma \ref{Lemma, embedding of alpha modulation spaces} to deduce
\begin{equation*}
  M_{p_2,q_1}^{(1-\alpha)B(p_2,q_1)+\epsilon,\alpha}\subset M_{p_2,p_2}^{(1-\alpha)B(p_2,p_2),\alpha}\subset h^{p_2},
\end{equation*}
which is the desired conclusion.

\section{Embedding between $L^1$ and $\alpha$-modulation spaces}

\subsection{Proof of Theorem \ref{theorem, sharpness of embedding from L1 to alpha-modulation spaces}.}
We first give the proof for necessity.
It is known that $h^p=L^p$ when $p>1$, but $h^1\subsetneqq   L^1$, due to the different structure between
$h^1$ and $L^1$, we have the a additional restriction associated with $L^1\subset M_{p,q}^{s,\alpha}$.
\begin{proposition}\label{proposition, necessity of L1 to mod}
  Let $0<p\leqslant \infty$ $0<q<\infty$, $s\in \mathbb{R}$, then $L^1\subset M_{p,q}^{s,\alpha}$ implies
  \begin{equation*}
    s< n\alpha(1/p-1)+n(1-\alpha)A(1,q)
  \end{equation*}
\end{proposition}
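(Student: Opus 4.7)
The plan is to detect the gap between $L^1$ and $h^1$ via a concentration test, since the $h^1$-based argument of Proposition 4.1 only produces strict inequality when $1/q>1$. Fix $\psi\in\mathscr{S}(\mathbb{R}^n)$ with $\hat{\psi}(0)=\int\psi\neq 0$ and set $\psi_{\epsilon}(x)=\epsilon^{-n}\psi(x/\epsilon)$. Then $\|\psi_{\epsilon}\|_{L^1}=\|\psi\|_{L^1}$ independently of $\epsilon$, so the assumed embedding $L^1\subset M^{s,\alpha}_{p,q}$ would force $\|\psi_{\epsilon}\|_{M^{s,\alpha}_{p,q}}$ to stay uniformly bounded as $\epsilon\to 0^+$. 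I will contradict this at the borderline value of $s$ by showing that the $\alpha$-modulation norm of $\psi_{\epsilon}$ actually blows up.

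For each fixed $k\in\mathbb{Z}^n$, $\widehat{\Box_k^{\alpha}\psi_{\epsilon}}(\xi)=\eta_k^{\alpha}(\xi)\hat{\psi}(\epsilon\xi)$ is supported in the fixed compact set $\mathrm{supp}\,\eta_k^{\alpha}$ and converges uniformly there to $\hat{\psi}(0)\eta_k^{\alpha}(\xi)$ as $\epsilon\to 0^+$. Inverting the Fourier transform, $\Box_k^{\alpha}\psi_{\epsilon}\to\hat{\psi}(0)\mathscr{F}^{-1}\eta_k^{\alpha}$ in $L^p(\mathbb{R}^n)$ for every $p\in(0,\infty]$; the quasi-Banach range $p<1$ is handled by observing that the Schwartz seminorms of $\eta_k^{\alpha}(\cdot)(\hat{\psi}(\epsilon\cdot)-\hat{\psi}(0))$ are uniformly bounded in $\epsilon$ while converging pointwise to zero. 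Applying the discrete Fatou lemma term by term to the $l^q$-sum then gives
\[
\liminf_{\epsilon\to 0^+}\|\psi_{\epsilon}\|_{M^{s,\alpha}_{p,q}}^{q}
\ \gtrsim\ \sum_{k\in\mathbb{Z}^n}\langle k\rangle^{\frac{sq}{1-\alpha}}\|\mathscr{F}^{-1}\eta_k^{\alpha}\|_{L^p}^{q}.
\]

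A routine rescaling argument, based on the fact that $\eta_k^{\alpha}$ is a bump of radius $\langle k\rangle^{\alpha/(1-\alpha)}$, shows that $\|\mathscr{F}^{-1}\eta_k^{\alpha}\|_{L^p}\sim\langle k\rangle^{n\alpha(1-1/p)/(1-\alpha)}$, so the summand scales as $\langle k\rangle^{(s+n\alpha(1-1/p))q/(1-\alpha)}$. The series $\sum_{k\in\mathbb{Z}^n}\langle k\rangle^{-\beta}$ diverges precisely when $\beta\leqslant n$, so the right-hand side blows up exactly at the threshold $s\geqslant n\alpha(1/p-1)-n(1-\alpha)/q$, which matches the critical constant appearing in the proposition since $A(1,q)=-n/q$ for $q\in(0,\infty)$. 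The contradiction with the uniform bound $\|\psi_{\epsilon}\|_{M^{s,\alpha}_{p,q}}\lesssim\|\psi\|_{L^1}$ forces the strict inequality. The only mildly delicate step is the per-block convergence for $0<p<1$, handled via the uniform Schwartz-seminorm control mentioned above; everything else is a direct computation. It is worth noting that this test is sharply specific to $L^1$: $\|\psi_{\epsilon}\|_{h^1}\sim\log(1/\epsilon)$ blows up, so the analogous argument cannot be run in $h^1$, which is exactly why $h^1\subset M^{s,\alpha}_{p,q}$ is permitted at the equality $s=n\alpha(1/p-1)+(1-\alpha)A(1,q)$ for the range $1\leqslant q<\infty$.
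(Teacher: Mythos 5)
Your proposal is correct and is essentially the paper's own argument: the paper tests the embedding against $f_J$ with $\widehat{f_J}(\xi)=\widehat{f}(\xi/2^J)$, i.e.\ the spatially concentrated family $f_J(x)=2^{Jn}f(2^Jx)$, which is exactly your approximate identity $\psi_{\epsilon}$ with $\epsilon=2^{-J}$, and both arguments reduce the matter to the divergence of $\sum_{k}\langle k\rangle^{sq/(1-\alpha)}\|\mathscr{F}^{-1}\eta_k^{\alpha}\|_{L^p}^{q}$ at the critical exponent. The only cosmetic difference is that the paper arranges $\widehat{f}\equiv 1$ on a ball so that $\Box_k^{\alpha}f_J=\mathscr{F}^{-1}\eta_k^{\alpha}$ exactly for each finite block of indices, whereas you pass to the limit via Fatou.
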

\begin{proof}
  Take $f$ to be a smooth function satisfying that $\text{supp} \widehat{f}\subset B(0,2)$ and $\widehat{f}(\xi)=1$ on $B(0,1)$.
  Denote $\widehat{f_j}(\xi)=f(\xi/2^j)$ for $j\geqslant 1$. By the scaling of $L^1$, we obtain $\|f_j\|_{L^1}\sim 1$ for all $j$.
  However, by the assumption of $f_j$,
  for any finite subset of $\mathbb{Z}^n$, denoted by $A$, we can find a sufficiently large $J$ such that
  \begin{equation*}
    \begin{split}
      \left(\sum_{k\in A}\langle k \rangle^{\frac{sq}{1-\alpha}}\langle k\rangle^{\frac{n\alpha}{1-\alpha}(1-1/p)q}\right)^{\frac{1}{q}}
      \sim &
      \left(\sum_{k\in A}\langle k \rangle^{\frac{sq}{1-\alpha}}\|\mathscr{F}^{-1}\eta_k^{\alpha}\|^q_{L^p}\right)^{\frac{1}{q}}
      \\
      \lesssim &
      \left(\sum_{k\in\mathbb{Z}^n}\langle k \rangle^{\frac{sq}{1-\alpha}}\|\Box_k^{\alpha}f_J\|_{L^p}^q \right)^{\frac{1}{q}}
      \sim
      \|f_J\|_{M_{p,q}^{s,\alpha}}\lesssim \|f_J\|_{L^1}\lesssim  1.
    \end{split}
\end{equation*}
By the arbitrary of $A$, we actually obtain
  \begin{equation*}
      \left(\sum_{k\in \mathbb{Z}^n}\langle k \rangle^{\frac{sq}{1-\alpha}}\langle k\rangle^{\frac{n\alpha}{1-\alpha}(1-1/p)q}\right)^{\frac{1}{q}}
      \lesssim 1.
\end{equation*}
which yields that $s<n\alpha(1/p-1)+n(1-\alpha)(-1/q)=n\alpha(1/p-1)+n(1-\alpha)A(1,q)$.
\end{proof}

By the same method used to prove Proposition \ref{Proposition, Necessity of local Hardy to mod.}, we obtain
\begin{equation}\label{for proof, 13}
  s\leqslant n\alpha(1/p-1)+n(1-\alpha)A(1,q).
\end{equation}
Then, we use Proposition \ref{proposition, necessity of L1 to mod} to conclude that inequality (\ref{for proof, 13}) must be strict when $q\neq \infty$.

Next, we turn to the sufficiency part.
When $q=\infty$, we have $p\geqslant 1$ and $s\leqslant n\alpha(1/p-1)$. Using Young's inequality, we deduce that
\begin{equation}
  \begin{split}
    \|f\|_{M_{p,\infty}^{s,\alpha}}&=\sup_{k\in\mathbb{Z}^n}\|\Box_k^{\alpha}f\|_{L^p} \langle k \rangle^{\frac{s}{1-\alpha}}
                                   \lesssim
                                   \sup_{k\in\mathbb{Z}^n}\|\mathscr{F}^{-1}\eta_k^{\alpha}\|_{L^p} \langle k \rangle^{\frac{s}{1-\alpha}} \|f\|_{L^1}
                                   \\
                                   &\lesssim
                                   \sup_{k\in\mathbb{Z}^n} \langle k\rangle^{\frac{n\alpha}{1-\alpha}(1-1/p)}
                                   \langle k \rangle^{\frac{s}{1-\alpha}} \|f\|_{L^1}
                                   \lesssim
                                   \|f\|_{L^1}.
  \end{split}
\end{equation}
For $q<\infty$, taking $\epsilon$ to be a fixed positive number,
observing that $n(1-\alpha)/q+n\alpha(1/p-1)+n(1-\alpha)A(1,q)-\epsilon<n\alpha(1/p-1)$ , we use Lemma \ref{Lemma, embedding of alpha modulation spaces} to deduce
\begin{equation}
  \|f\|_{M_{p,q}^{n\alpha(1/p-1)+n(1-\alpha)A(1,q))-\epsilon,\alpha}}
  \lesssim
  \|f\|_{M_{p,\infty}^{n\alpha(1/p-1),\alpha}}
  \lesssim
  \|f\|_{L^1}.
\end{equation}

\subsection{Proof of Theorem \ref{theorem, sharpness of embedding from alpha-modulation spaces to L1}.}

By the same method used in the proof of Proposition \ref{Proposition, Necessity of mod to local Hardy.},
we can verify the necessity of Theorem \ref{theorem, sharpness of embedding from alpha-modulation spaces to L1}.
On the other hand, using Theorem \ref{theorem, sharpness of embedding from alpha-modulation spaces to local Hardy spaces} and the fact that $h^1\subset L^1$,
  we get the sufficiency of Theorem \ref{theorem, sharpness of embedding from alpha-modulation spaces to L1}.

\section{Embedding between $L^{\infty}$ and $\alpha$-modulation spaces}
\subsection{Proof of Theorem \ref{theorem, sharpness of embedding from L-infinity to alpha-modulation spaces}}
First, we give the proof for necessity.
 By the same method used in the proof Proposition \ref{Proposition, Necessity of local Hardy to mod.},
 we deduce $1/p\leqslant 1/\infty$, which yields $p=\infty$. We also have $s\leqslant n(1-\alpha)A(\infty,q)$ with strict inequality when $q\neq \infty$.

Next, we turn to the sufficiency part.
For $q=\infty$, we have $s\leqslant 0$.
Thus, we have
\begin{equation*}
  \|f\|_{M_{\infty,\infty}^{s,\alpha}}\leqslant \|f\|_{M_{\infty,\infty}^{0,\alpha}}=\sup_{k\in \mathbb{Z}^n}\|\Box_k^{\alpha}f\|_{\infty}\leqslant \sup_{k\in \mathbb{Z}^n}\|f\|_{\infty} \leqslant \|f\|_{L^{\infty}}.
\end{equation*}
When $q\neq\infty$ and $s<-n(1-\alpha)/q$, by Young's inequality, we have
\begin{equation}
  \begin{split}
    \|f\|_{M_{\infty,q}^{s,\alpha}}
                       &=
                       \left(\sum_{k\in\mathbb{Z}^n}\|\Box_k^{\alpha}f\|_{L^{\infty}}^q \langle k \rangle^{\frac{sq}{1-\alpha}} \right)^{\frac{1}{q}}
                       \lesssim
                       \left(\sum_{k\in\mathbb{Z}^n} \langle k \rangle^{\frac{sq}{1-\alpha}} \right)^{\frac{1}{q}} \|f\|_{L^{\infty}}
                       \lesssim
                       \|f\|_{L^{\infty}},
  \end{split}
\end{equation}
where we use ${sq}/(1-\alpha)<-n$ in the last inequality.
\subsection{Proof of Theorem \ref{theorem, sharpness of embedding from alpha-modulation spaces to L-infinity}.}
First, we give the proof for necessity.
By the structure of $L^{\infty}$,
we establish the following proposition for restriction of $M_{p,q}^{s,\alpha}\subset L^{\infty}$.

\begin{proposition}\label{proposition, necessity of mod to L-infty}
Let $0<p\leqslant \infty$, $q>1$, then $M_{p,q}^{s,\alpha}\subset L^{\infty}$ implies
\begin{equation*}
  l_q^{s+n\alpha(1-1/p),\alpha}\subset l_1^{n\alpha,\alpha}.
\end{equation*}
\end{proposition}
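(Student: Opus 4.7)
The plan mirrors the strategy used in Propositions \ref{Proposition, Necessity of local Hardy to mod.} and \ref{Proposition, Necessity of mod to local Hardy.}: I will construct a family of test functions indexed by finitely-supported nonnegative sequences $\vec{b}=\{b_k\}_{k\in\mathbb{Z}^n}$, bound the $M_{p,q}^{s,\alpha}$-norm from above and the $L^{\infty}$-norm from below, and then invoke the assumed embedding to transfer the resulting inequality to the sequence level.

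Concretely, I would pick a Schwartz function $f$ with $f(0)>0$ and $\widehat{f}$ supported in a very small ball about the origin, and set
\[
\widehat{f_k}(\xi)=\widehat{f}\bigg(\frac{\xi-\langle k\rangle^{\alpha/(1-\alpha)}k}{\langle k\rangle^{\alpha/(1-\alpha)}}\bigg),
\]
exactly as in the earlier propositions. Choosing the support small enough ensures the orthogonality relations $\Box_k^{\alpha}f_k=f_k$ and $\Box_l^{\alpha}f_k=0$ for $l\neq k$. Given a truncated nonnegative sequence $\vec{b}$, I define $F=\sum_{k}b_kf_k$. A direct Fourier computation gives
\[
f_k(x)=\langle k\rangle^{n\alpha/(1-\alpha)}\,f\bigl(\langle k\rangle^{\alpha/(1-\alpha)}x\bigr)\,e^{2\pi i\langle k\rangle^{\alpha/(1-\alpha)}k\cdot x},
\]
hence $\|f_k\|_{L^p}\sim\langle k\rangle^{n\alpha(1-1/p)/(1-\alpha)}$ and, via the orthogonality above,
\[
\|F\|_{M_{p,q}^{s,\alpha}}\sim\|\vec{b}\|_{l_q^{s+n\alpha(1-1/p),\alpha}}.
\]

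The crucial observation is that at $x=0$ every phase factor collapses to $1$, so there is no cancellation and
\[
\|F\|_{L^{\infty}}\geq F(0)=f(0)\sum_{k}b_k\langle k\rangle^{n\alpha/(1-\alpha)}\sim\|\vec{b}\|_{l_1^{n\alpha,\alpha}}.
\]
Applying the assumed inclusion $M_{p,q}^{s,\alpha}\subset L^{\infty}$ yields $\|\vec{b}\|_{l_1^{n\alpha,\alpha}}\lesssim\|\vec{b}\|_{l_q^{s+n\alpha(1-1/p),\alpha}}$ for every truncated nonnegative $\vec{b}$; a standard approximation argument then extends this to the embedding $l_q^{s+n\alpha(1-1/p),\alpha}\subset l_1^{n\alpha,\alpha}$.

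The main obstacle is arranging that the lower bound for $\|F\|_{L^{\infty}}$ actually captures the full $l_1^{n\alpha,\alpha}$-norm of $\vec{b}$ rather than some partial sum corrupted by interference; this is precisely why the atoms $f_k$ are chosen so their modulation phases align at the origin, which is automatic here. A secondary technicality — that $F$ need not belong to $L^{\infty}$ without truncation — is handled as in the earlier propositions by working with finitely-supported $\vec{b}$ and passing to a supremum.
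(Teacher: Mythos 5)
Your proposal is correct and follows essentially the same route as the paper: the same modulated–dilated atoms $f_k$ with small Fourier support, the same computation $\|F\|_{M_{p,q}^{s,\alpha}}\sim\|\vec{b}\|_{l_q^{s+n\alpha(1-1/p),\alpha}}$, and the same evaluation at $x=0$ (where all phases equal $1$) to get the lower bound $\|F\|_{L^\infty}\geqslant F(0)\sim\|\vec{b}\|_{l_1^{n\alpha,\alpha}}$. Nothing further is needed.
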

\begin{proof}
Take $f\in\mathscr{S}$ to be a nonzero smooth function whose Fourier transform has small support, such that $f(0)=1$,
$\Box_k^{\alpha}f_k=f_k$ and $\Box_l^{\alpha}f_k=0$ if $k\neq l$,
where we denote $\widehat{f_k}(x)=\widehat{f}\left(\frac{\xi-\langle k\rangle^{\frac{\alpha}{1-\alpha}} k}{\langle k\rangle^{\frac{\alpha}{1-\alpha}}}\right)$.
For a truncated (only finite nonzero items) nonnegative sequence $\vec{a}=\{a_k\}_{k\in \mathbb{Z}^n}$,
we define
\begin{equation}
  F(x)=\sum\limits_{k\in \mathbb{Z}^n}a_kf_k.
\end{equation}
By a direct computation, we have
\begin{equation*}
    \|F\|_{M_{p_2,q_2}^{s_2,\alpha}}
    \sim
    \|\vec{a}\|_{l_{q_2}^{n\alpha(1-1/p_2)+s_2,\alpha}}.
\end{equation*}
On the other hand, observing that $F(x)=\sum\limits_{k\in \mathbb{Z}^n}a_k\langle k\rangle^{\frac{\alpha n}{1-\alpha}}e^{2\pi i\langle k\rangle^{\frac{\alpha}{1-\alpha}} k\cdot x}f(\langle k\rangle^{\frac{\alpha}{1-\alpha}}x)$,
we have
\begin{equation*}
  \|F\|_{L^{\infty}}\geqslant F(0)=\sum_{k\in \mathbb{Z}^n}a_k\langle k\rangle^{\frac{\alpha n}{1-\alpha}}f(0)=\sum_{k\in \mathbb{Z}^n}a_k\langle k\rangle^{\frac{\alpha n}{1-\alpha}}=\|\vec{a}\|_{l_1^{n\alpha,\alpha}}.
\end{equation*}
Thus, we have
\begin{equation*}
  \|\vec{a}\|_{l_1^{n\alpha,\alpha}}
  \lesssim
  \|F\|_{L^{\infty}}
  \lesssim
  \|F\|_{M_{p,q}^{s,\alpha}}
  \sim
  \|\vec{a}\|_{l_{q}^{n\alpha(1-1/p)+s,\alpha}}.
\end{equation*}
By the arbitrary of $\vec{a}$, we obtain the desired conclusion.
\end{proof}
Using Proposition \ref{proposition, necessity of mod to L-infty} and Lemma \ref{lemma, Sharpness of embedding, for alpha decomposition}, we deduce that
$s\geqslant n\alpha/p$ for $1\leqslant 1/q$, and $s>n\alpha/p+n(1-\alpha)(1-1/q)$ for $1>1/q$, which is just the conclusion.

Now, we turn to the proof of sufficiency.
In fact,
by Lemma \ref{Lemma, embedding of alpha modulation spaces}, we have that
\begin{equation}
  \|f\|_{L^{\infty}}\leqslant \sum_{k\in\mathbb{R}^n}\|\Box_k^{\alpha}f\|_{L^{\infty}}
                    =         \|f\|_{M_{\infty,1}^{0,\alpha}}
                    \lesssim  \|f\|_{M_{p,q}^{s,\alpha}}.
\end{equation}
where $s\geqslant n\alpha/p$ for $1\leqslant 1/q$, or $s>n\alpha/p+n(1-\alpha)(1-1/q)$ for $1>1/q$.

\subsection*{Acknowledgements} This work was supported by the National Natural Foundation of China (No. 11601456).

\end{document}